\documentclass[a4paper,10pt]{article}
\usepackage{amsmath}
\usepackage{amssymb}

\usepackage{amscd}
\usepackage{amsthm}
\usepackage{url}
\usepackage{graphicx}

\usepackage{xcolor}

\newtheorem{thm}{Theorem}
\newtheorem{cor}[thm]{Corollary}

\newtheorem{lem}[thm]{Lemma}

\newtheorem{defin}[thm]{Definition}
\newtheorem{prop}[thm]{Proposition}

\title{On the Topology of the Inverse Limit of a Branched Covering over a
Riemann Surface.}
\author{Carlos Cabrera\footnote{UNAM, Mexico.}, Chokri Cherif\footnote{BMCC/CUNY, USA.} and Avraham Goldstein\footnote{CUNY, USA. Email: avraham.goldstein.nyc@gmail.com.}}

\begin{document}
\maketitle

\begin{abstract}
We introduce the Plaque Topology on the inverse limit of a branched covering self-map
of a Riemann surface of a finite degree greater than one. We present the notions of regular and irregular points in the setting of this Plaque Inverse Limit and study its local topological properties at the irregular points. We construct a certain
Boolean algebra and a certain sigma-lattice, derived from it, and use them to compute local topological invariants of the Plaque Inverse Limit. Finally, we obtain several results interrelating the dynamics of the forward iterations of the self-map and the topology of the Plaque Inverse Limit.
\end{abstract}

{\bf Keywords:} Inverse limit, Riemann surface lamination, holomorphic dynamics, branched coverings, local topology, irregular points, Boolean algebra

\section{Introduction.}

Topological inverse limits of dynamical systems were constructed and their topological
properties were studied in literature since the late 1920s. The most famous
classical examples of such inverse limits are the solenoids, which are defined as the
inverse limits of the iterates of the $d$-fold covering self-mapping $f(z)=z^d$ (where $d>1$) of the unit circle $S^1$.
The inverse limit of these iterates, for a fixed integer $d$,
is called the $d$-adic solenoid. It is a compact, metrizable topological space that is connected,
but neither locally connected nor path connected. Solenoids were first introduced by L. Vietoris in 1927 for $d=2$ (see \cite{Vietoris}) and later in 1930 by van Dantzig for an arbitrary $d$ (see \cite{Danzig}).
\\ \\
D. Sullivan in \cite{Sullivan} introduced Riemann surface laminations, which arise when taking
inverse limits in dynamics. A Riemann surface lamination is locally the product of a complex
disk and a Cantor set. In particular, D. Sullivan associates such lamination to any smooth,
expanding self-mapping of the circle $S^1$, with the maps $f(z)=z^d$ being examples of such mappings.
\\ \\
% I made changes in the following paragraph. Answered to the first 2 comments of the Reviewer:
M. Lyubich and Y. Minsky took it one dimension higher. In \cite{LM} they consider dynamics of
rational self-mappings of the Riemann sphere and introduce three-dimensional laminations associated
with these dynamics. Thus, the theory of Riemann surface laminations associated with holomorphic
dynamics was first founded and formalized by M. Lyubich and Y. Minsky in \cite{LM} and, in parallel, by M. Su in \cite{Su}. The notions
of regular points and, by complement, irregular points were introduced in \cite{LM}. First, M. Lyubich and Y. Minsky consider the standard (Tychonoff) inverse limit of the iterations of a
rational map applied to the Riemann sphere, regarding them just as iterations of a continuous branched
covering map applied to a Hausdorff topological space. They call this inverse limit the natural extension of
the original dynamical system. Next, they define a point of this natural extension to be regular if the
pull-back of some open neighborhood of its first coordinate along that point will eventually become univalent.
The set of all regular points of the natural extension, which is the natural extension with
all the irregular points removed, is called the regular set. The Riemann surface lamination, which in the Lyubich-Minsky theory is associated with a holomorphic dynamical system, in many cases, is just the regular set. In general, certain modifications are performed to the regular set, in order to satisfy the requirement, that the conformal structure on the leaves of the Riemann surface lamination is continuous along the fiber of the lamination. For the details of Lyubich-Minsky's definition and construction of the Riemann surface lamination, which are somewhat elaborate, we refer to \cite{LM}.
% My changes (answer to the first 2 comments of the Reviewer) end here.
\\ \\
In this paper we consider the following two questions:
\begin{itemize}
 \item How can the irregular points be distinguished and characterized?
 \item What is the relationship between the dynamics of the system and the characterization of the irregular points?
\end{itemize}

For our purposes it is more natural to equip the inverse limit with
the box topology, which is more refined than the Tychonoff topology.
\\ \\
An inverse dynamical system is a sequence:
$$S_1\;^{\underleftarrow{\;\;f_1\;\;}} S_2\;
^{\underleftarrow{\;\;f_2\;\;}} S_3\;...$$ of Riemann surfaces $S_i$
and holomorphic branched coverings $f_i:S_{i+1}\rightarrow S_i$ where
all $S_{i}$ are equal to a given Riemann surface $S_0$ and all $f_{i}$ are
equal to a given holomorphic branched covering map $f:S_0\rightarrow S_0$ of
degree $d$. In this work we assume that $1<d<\infty$ and $S_0$ is either the
unit disk, the complex plane or the Riemann sphere. We define the Plaque
Inverse Limit [P.I.L.] $S_{\infty}$ of that inverse dynamical system to be the following topological space: its underlying set is the set of all
the sequences $x=(x_1\in S_1,x_2\in S_2,...)$ of points, such that
$f_{i}(x_{i+1})=x_i$ for $i=1,2,...$; its topology is the family of
all the sequences $U=(U_1\subset S_1,U_2\subset S_2,...)$ of open sets, such that
$f_{i}(U_{i+1})=U_i$ for $i=1,2,...$. Let $p_i$, for all $i$, be the map from
$S_{\infty}$ onto $S_i$, which takes $(x_1,x_2,...)\in S_{\infty}$ to $x_i\in
S_i$. The maps $p_i$ are continuous and satisfy $f_i\circ p_{i+1}=p_i$. They are called the projection maps from $S_{\infty}$ onto $S_i$.
\\ \\
The standard Inverse Limit $\bar{S}_{\infty}$ of
$f$, as a set, is defined exactly as P.I.L., but is equipped with the
Tychonoff topology, in which the open sets are
all sequences $U=(U_1\subset S_1,U_2\subset S_2,...)$ of open sets, where
$f_{i}(U_{i+1})=U_i$ for $i=1,2,...$, such that there exists some number $t$, so that $f^{-1}_{i}(U_{i})=U_{i+1}$ for all $i>t$.
Thus, P.I.L. has more open sets than the standard Inverse Limit.
So, the identity map from the P.I.L. onto the Inverse Limit is
continuous.
\\ \\
Obviously, the above mentioned projection maps $p_i$ are also continuous, as maps from
$\bar{S}_\infty$ onto $S_i$. Actually, the categorical definition of the
topology of the inverse limit of an inverse system is precisely the ``minimal"
topology, which makes these projection maps $p_i$ continuous. Minimality, in this context, means
that any other topological space, together with maps from it into the inverse
system, which commute with the maps of the inverse system, can be mapped into the inverse
system through the inverse limit, and this can be done in a unique way. It is important to notice, that the map $f$
induces an automorphism $f$ of the P.I.L. and an automorphism $\bar{f}$ of the
inverse limit.
\\ \\
A local base for the topology of $S_{\infty}$ at a point $x$ consists
of all open sets $U$, containing $x$, such that each $U_i$ is conformally equivalent
to the unit disk in the complex plane and $f_i$, restricted to
$U_{i+1}$, is conformally equivalent to some self-map $z^t$ of the unit disk of a degree $t$, where $1\le t\le d$. Such open sets $U$ are called plaques. When we speak of a
neighborhood of a point in $S_{\infty}$, we always assume it to be a plaque. Similarly, when we speak of a neighborhood of a point in a Riemann surface, we, unless otherwise stated, assume it to be simply connected.
A point $x\in S_{\infty}$ is called \textit{regular} if, for some
neighborhood $U$ of $x$, there exists $n$, such that $U_{n+i+1}$ contains no
critical points of $f_{n+i}$ for all $i=0,1,2,...$. Thus,
$f_{n+i}:U_{n+i+1}\rightarrow U_{n+i}$ is a conformal equivalence.
Otherwise, the point $x\in S_{\infty}$ is called \textit{irregular}.
\\ \\
The set $\Delta$ of all the regular points of $S_{\infty}$ is open and
each of its path-connected components has a Riemann surface
structure. These Riemann surfaces were studied and fully classified in
\cite{LM}. In this work we:
\begin{itemize}
\item show that $\Delta$ is not empty;
\item show that a point $x\in S_{\infty}$ is irregular if and only if
there exists a neighborhood $U$ of
$x$ such that, for any neighborhood $V$ of $x$, whose closure $\overline{V}$ is contained inside $U$,
the open set $V-\{x\}$ is an uncountable
union of pairwise disjoint path-connected components. Thus, at an irregular
point $x\in S_{\infty}$ the P.I.L. is not even a topological manifold;
\item develop and construct a $\sigma$-algebraic machinery allowing us to obtain
and compute certain local invariants of P.I.L.;
\item use these invariants to characterize irregular points of P.I.L.; and
\item relate these invariants to the
properties
of the original dynamical system.
\end{itemize}
We would like to thank the anonymous Referee for a great contribution to this work.
\section{Constructions and definitions.}
For a dynamical system $f:S_0\rightarrow S_0$, where $S_0$ is either
the unit disk, the complex plane, or the Riemann sphere, and $f$ is a
branched covering of degree $1<d<\infty$, we define an inverse
dynamical system $S$ as:
$$S_0\;^{\underleftarrow{\;\;f\;\;}} \; S_1\;^{\underleftarrow{\;\;f_1\;\;}}
S_2\;
^{\underleftarrow{\;\;f_2\;\;}}\;...,$$ where $S_0=S_1=S_2=...$
and $f=f_1=f_2=...$. We speak of $f$ as a map from $S_i$ to $S_{i-1}$ for
$i>0$.
\\ \\
We denote the critical points of $f$ by $c_1,...,c_k$.
\\ \\
Since $1\le k\le \chi(S_0)\cdot(d-1)$, where $\chi(S_0)$ is the Euler
characteristic of $S_0$, we get $1\le k\le 2(d-1)$ for the Riemann sphere and
$1\le k\le (d-1)$ for the complex plane and the unit disk.
\\ \\
The following three definitions and Lemma \ref{regularexist}, after them, apply also in the
more general case of an inverse system, in which one does not require all the Riemann surfaces $S_i$
and all the holomorphic branched coverings $f_i$ to be the same:

\begin{defin} The Plaque Inverse Limit [P.I.L.] $S_{\infty}$ of an inverse system
is the set of all sequences of points
$(x_1\in S_1,x_2\in S_2,...)$, such that $f_{i}(x_{i+1})=x_i$,
 equipped with the topology, generated by all the sequences of
open sets\\ $U=(U_1\subset S_1,U_2\subset S_2,...)$, such that
$f_{i}(U_{i+1})=U_i$.
\end{defin}
% Here I added the last sentence, that our Inverse Limit is regular and first countable.
The P.I.L. $S_{\infty}$ is equipped with
continuous projection maps $p_i:S_\infty\rightarrow
S_i$, defined by $p_i(x_1,x_2,...)=x_i$. We have $f_i\circ
p_{i+1}=p_i$. These maps constitute part of the structure of $S_{\infty}$. Notice that $S_{\infty}$, as a topological space, is regular and first-countable. Abusing the notation, we will also call the underlying topological space $S_{\infty}$.
% Until here "regular and first countable".

\begin{defin} An open set $U\subset S_{\infty}$ is called a plaque if each
$U_i\subset S_i$ is conformally equivalent
to the unit disk in $\mathbb{C}$ and each $f_i$, restricted to
$U_{i+1}$, is conformally equivalent to a self-map $z^t$ of the unit disk of a degree $t\leq deg(f_i)$.\end{defin}

All the plaques, containing a point $x\in S_{\infty}$, constitute
a local base for the topology of $S_{\infty}$ at $x$. In this work, whenever we consider an open
neighborhood of a point in a Riemann surface, we assume it to be simply connected. Similarly, in this work open
neighborhoods of points in $S_{\infty}$ are assumed to be plaques.

\begin{defin} A point $x\in S_{\infty}$ is called regular if there exists an
open neighborhood $U$ of $x$ such that for some positive integer $n$,
$f_{i}:U_{i+1}\rightarrow U_{i}$ are
bijections for all $i\geq n$. Otherwise, $x$ is called irregular.
\end{defin}

The set of all the regular points of $S_{\infty}$ is denoted by $\Delta$. In the following
lemma we require all $S_i$ to be a fixed Riemann surface $-$ either the Riemann sphere, the complex plane, or the unit disk.

\begin{lem} \label{regularexist} If, for all $i=1,2,...$, $deg(f_{i+1})\le 2*deg(f_i)$, then the set $\Delta$ is not empty. \end{lem}

\begin{proof} For all $i=1,2,...$ let $q_i=10deg(f_i)$. Take any $q_1$ nonempty, pairwise disjoint, simply connected open
sets $U_1(1),...,U_1(q_1)$ in $S_1$, which do not
contain any images of the critical points of $f_1$. Then, for each
$U_1(i)$, there are $deg(f_1)$ pairwise disjoint, simply connected, open sets in
$S_2$ which map onto $U_1(i)$ by $f_1$. Amongst these
$deg(f_1)\cdot q_1=10(deg(f_1))^2$ open pre-image sets in $S_2$, at least
$10(deg(f_1))^2-2(deg(f_2)-1)$ do not contain any
images of the critical points of $f_2$. Since $deg(f_1)\ge 2$, we obtain
$10(deg(f_1))^2-2(deg(f_2)-1)\ge 10(deg(f_1))^2-4deg(f_1)+2\ge 20deg(f_1)
\ge10deg(f_2)=q_2$. So, amongst these pre-images we can select $q_2$ open,
simply connected, pre-images $U_2(1),...,U_2(q_2)$, which do not contain any images of
the critical points of $f_2$. Repeating this process
an infinite number of times produces at least one non-empty, open, simply connected set
in $S_\infty$, which contains no critical points of any $f_i$.
\end{proof}
Now we introduce certain algebraic structures, which are used to compute
local invariants of the Plaque Inverse Limits.
The set of all binary sequences, equipped with the below listed structures, is a Boolean algebra:
\begin{itemize}
\item the operations $\vee$ and $\wedge$, defined by performing the binary operations \textbf{or} and
\textbf{and}, respectively, in each coordinate of the sequences;
\item a partial order $\le$ on the sequences, defined by: $b\le a$ if and
only if $a\vee
b=a$. This is equivalent to defining $b\le a$ if and only if every entry of
$b$ is less than or equal to the corresponding entry of $a$;
\item with the order above, there is a minimal element $(0,0,0,...)$ and a
maximal element $(1,1,1,...)$;
\item the negation operation $\neg$, which interchanges 0 and 1 in every coordinate of the binary sequence.
\end{itemize}
Two binary sequences are called \textit{almost equal} if they
differ only in a finite number of places. This ``almost equality" is an
equivalence
relation. Additionally, it respects the $\vee$, $\wedge$, $\neg$
operations, the partial order $\le$, and the minimal and maximal elements.

\begin{defin} The set $I$ is the Boolean algebra of all classes of almost equal
binary sequences, equipped with the $\vee$ and $\wedge$ operations which are
defined as follows:
$$[a_1,a_2,...]\vee [b_1,b_2,...]=[a_1\vee b_1,a_2\vee b_2,...]$$ and
$$[a_1,a_2,...]\wedge [b_1,b_2,...]=[a_1\wedge b_1,a_2\wedge b_2,...].$$
Its minimal element is $\textbf{0}=[0,0,...]$ and its maximal element is
$\textbf{1}=[1,1,...]$. Its negation is $\neg[a_1,a_2,...]=[\neg a_1,\neg
a_2,...]$.
\end{defin}
\begin{defin}
For every $a\in I$, we define $\alpha(a)\subset I$ as the set of all
$b\in I$ such that $b\le a$.
\end{defin}
Note that $\alpha(a)\cup \alpha(b)\subset \alpha(a\vee b)$,
$\alpha(a\wedge b)=\alpha(a)\cap \alpha(b)$,
$\alpha(\textbf{0})=\{\textbf{0}\}$,
and $\alpha(\textbf{1})=I$.

\begin{defin} \label{categ}
The $\sigma$-lattice $A$, spanned by all $\alpha(a)$, where
$a\in I$, with the operations $\cup$ and $\cap$, the minimal
element $\{\textbf{0}\}$, and the maximal element $I$, is called the
signature $\sigma$-lattice. The elements of $A$ are called signatures.
\end{defin}

It is clear that $\subset$ defines a partial order on $A$. This
partial order is consistent with the partial order $\leq$ of $I$ under the map
$\alpha$, since if $b\le a$ then $b\in \alpha(a)$. Therefore, $\alpha(b)\subset
\alpha(a)$. Vice versa, if $\alpha(b)\subset \alpha(a)$, then $b\in \alpha(a)$.
So, $b\le a$.

\begin{defin} For every integer $m$, we define the map $shift_m:I\rightarrow I$,
which takes each class $[i]\in I$ to the class of the binary sequence, obtained from $i$
by adjoining $m$ initial $0$ entries to it if $m\ge 0$ or by deleting $m$
initial entries from it if $m<0$.
\end{defin}

We have that  $shift_0=Id_I$ and $shift_m\circ shift_{-m}=Id_I$ for all
$m$, since changing a finite number of entries in a binary sequence does not
change its class in $I$.

\begin{lem} The maps $shift_m:I\rightarrow I$ induce maps $shift_m:A\rightarrow
A$ and, again, $shift_0=Id_A$ and $shift_m\circ shift_{-m}=Id_A$.
\end{lem}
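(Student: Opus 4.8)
The plan is to promote $shift_m$ from $I$ to $A$ by pushing forward subsets of $I$: for any $X\subseteq I$ set $shift_m(X)=\{shift_m(b):b\in X\}$, the ordinary set-theoretic image. This is an unambiguously defined operation on the power set of $I$, so there is no well-definedness issue to worry about; all that must be checked is that this push-forward carries each generating signature $\alpha(a)$ back into $A$, that it respects the sigma-lattice operations $\cup$ and $\cap$, and that it reproduces the two asserted identities $shift_0=Id_A$ and $shift_m\circ shift_{-m}=Id_A$.

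The load-bearing preliminary is to observe that $shift_m$ is not merely a self-map of $I$ but an \emph{automorphism} of the Boolean algebra $I$. Adjoining (resp.\ deleting) initial entries acts coordinate-wise away from a finite initial block, so it commutes with the coordinate-wise $\vee$ and $\wedge$ and is therefore monotone for $\le$; and the already established identity $shift_m\circ shift_{-m}=Id_I$, applied with $m$ and with $-m$, exhibits $shift_{-m}$ as a two-sided inverse of $shift_m$, so $shift_m$ is a bijection of $I$.

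The key computation is then $shift_m(\alpha(a))=\alpha(shift_m(a))$. The inclusion $\subseteq$ is immediate from monotonicity, since $b\le a$ forces $shift_m(b)\le shift_m(a)$. For $\supseteq$, given $c\le shift_m(a)$ put $b=shift_{-m}(c)$; then $shift_m(b)=c$, and monotonicity together with $shift_{-m}\circ shift_m=Id_I$ gives $b=shift_{-m}(c)\le shift_{-m}(shift_m(a))=a$, so $c\in shift_m(\alpha(a))$. Thus the push-forward sends each generator of $A$ to another generator. Because $shift_m$ is a bijection, its set-image commutes with arbitrary unions (automatic) and with arbitrary intersections (valid for injections, since a point lying in every $shift_m(X_i)$ has a single preimage that then lies in every $X_i$); hence it is a homomorphism for $\cup$ and $\cap$ sending the generators $\alpha(a)$ into $A$, and therefore maps the whole sigma-lattice $A$ spanned by them into itself, with the same argument applied to $shift_{-m}$ showing the image is all of $A$. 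Finally $shift_0=Id_I$ pushes forward to the identity of the power set, hence to $Id_A$, while functoriality of the set-image, $(shift_m)\circ(shift_{-m})=(shift_m\circ shift_{-m})=(Id_I)$, restricts to $shift_m\circ shift_{-m}=Id_A$.

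The main obstacle---indeed essentially the only point needing care---is that $A$ is closed under $\cup$ and $\cap$ but \emph{not} under complementation, so the induced map must be shown to respect both operations exactly. Union-preservation holds for any set-image, but intersection-preservation genuinely requires injectivity of $shift_m$; this is precisely why upgrading $shift_m$ from an endomorphism to an automorphism of $I$ in the second step is the crucial input, and why the shift identities on $I$ must be invoked rather than merely the monotonicity of $shift_m$.
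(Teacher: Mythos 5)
Your proof is correct. The paper states this lemma without any proof at all, so there is nothing to compare it against; your argument supplies exactly the details the authors omit. The route you take is the natural one: you push subsets of $I$ forward, verify the key identity $shift_m(\alpha(a))=\alpha(shift_m(a))$ using monotonicity in one direction and the inverse $shift_{-m}$ in the other, and then use injectivity of $shift_m$ (which is what makes the set-image commute with intersections as well as unions) to conclude that the push-forward preserves the sigma-lattice operations and hence carries $A$ into $A$, with $shift_{-m}$ providing the inverse. The one point you rightly flag as load-bearing --- that preservation of intersections requires injectivity, not just monotonicity --- is indeed the only place where the argument could silently go wrong, and you handle it correctly.
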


\section{Properties of lattices $I$ and $A$.}
In this work we need the following crucial property of the image $\alpha(I)$ of
$I$ in the $\sigma$-lattice $A$ under the map $\alpha$. Let $[i_1],[i_2],[i_3],...$ and $[t_1],[t_2],[t_3],...$
be elements of $I$.

\begin{thm}\label{thm.stab} If $$\alpha[i_1]\cup\alpha[i_2]\cup\alpha[i_3]\cup
...=\beta=\alpha[t_1]\cap\alpha[t_2]\cap\alpha[t_3]\cap...$$
for some $\beta\in A$, then there exist some natural numbers $m$ and $n$ such
that
$$\alpha[i_1]\cup...\cup\alpha[i_m]=\beta=\alpha[t_1]\cap...\cap\alpha[t_n].$$ So,
$[i_1]\vee...\vee[i_m]=[t_1]\wedge...\wedge[t_n]$ and $\beta=\alpha([i_1]\vee...\vee[i_m])$.
\end{thm}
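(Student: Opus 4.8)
The plan is to work in a concrete model of the abstract lattice. First I would identify $I$ with $\mathcal{P}(\mathbb{N})/\mathrm{fin}$, so that a class $[a]$ is a subset $a\subseteq\mathbb{N}$ taken modulo finite sets, the order $\le$ is almost-inclusion $\subseteq^{*}$, and $\alpha[a]=\{[b]:b\subseteq^{*}a\}$ is the principal ideal below $[a]$. The hypothesis then says that a single set $\beta$ is simultaneously a countable union of principal ideals and a countable intersection of principal ideals. Two consequences drive everything: from $\beta=\bigcap_k\alpha[t_k]$, the set $\beta$ is precisely the collection of common lower bounds of the family $\{[t_k]\}$, hence is down-closed and closed under $\vee$ (an ideal); and from $\beta=\bigcup_j\alpha[i_j]$, each generator $[i_j]$ is itself a common lower bound, while every common lower bound lies below some $[i_j]$ — that is, the $[i_j]$ are cofinal among the common lower bounds of the $[t_k]$.

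Next I would replace the two families by monotone chains. Put $F_n=[i_1]\vee\cdots\vee[i_n]$; since $\beta$ is an ideal each $[F_n]\in\beta$, the classes $[F_n]$ increase, and $\bigcup_n\alpha[F_n]=\beta$ with cofinality preserved. Dually put $G_n=[t_1]\wedge\cdots\wedge[t_n]$, a decreasing chain with $\alpha[G_n]=\alpha[t_1]\cap\cdots\cap\alpha[t_n]$ and $\bigcap_n\alpha[G_n]=\beta$. In these terms the theorem asserts that the increasing chain $[F_n]$ attains a maximum and the decreasing chain $[G_n]$ reaches its limit at a finite stage, both equal to the largest element $c$ of $\beta$. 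Once a maximum $[F_m]$ is produced I recover an original index: $[F_m]\in\beta$ gives $[F_m]\le[i_{j}]$ for some $j$, while $[i_{j}]\in\beta=\alpha[F_m]$ gives $[i_j]\le[F_m]$, so $\alpha[i_j]=\beta$; and then $\alpha[G_m]=\beta$ reads $[i_j]=[t_1]\wedge\cdots\wedge[t_m]$, which is the asserted conclusion.

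The two collapses are then proved by diagonal constructions using genuinely (not merely almost) monotone representatives. For the union, suppose the chain $[F_n]$ never stabilizes; passing to a strictly increasing subchain, the honest differences $D_n=F_{n+1}\setminus F_n$ are infinite. Because $F_{n+1}$ is a common lower bound, $D_n\subseteq^{*}T_k$ for every $k$, so $D_n\cap T_1\cap\cdots\cap T_n$ is infinite and I may pick distinct $x_n$ in it. The set $b=\{x_n\}$ satisfies $b\setminus T_k\subseteq\{x_1,\dots,x_{k-1}\}$, hence $[b]\in\beta$, yet $b\setminus F_n\supseteq\{x_{n+1},x_{n+2},\dots\}$ is infinite for every $n$, so $[b]$ lies below no $[F_n]$, contradicting cofinality. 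Thus $[F_n]$ stabilizes and the maximum exists. For the intersection, fix a representative $c$ of the largest element of $\beta$ and suppose $[G_n]>[c]$ for every $n$; then $H_n=G_n\setminus c$ are infinite and decreasing, and the standard pseudo-intersection (choose distinct $x_n\in H_n$) yields $b=\{x_n\}$ with $[b]\le[t_k]$ for all $k$ but $b\cap c=\emptyset$ and $b$ infinite, so $[b]\in\beta=\alpha[c]$ while $[b]\not\le[c]$ — a contradiction. Hence some $[G_m]=[c]$.

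The essential obstacle is that $\mathcal{P}(\mathbb{N})/\mathrm{fin}$ is not $\sigma$-complete, so a countable union or intersection of principal ideals need not be principal and monotone chains need not have suprema or infima; the collapse is genuinely a consequence of $\beta$ being a union and an intersection \emph{at the same time}, which is exactly what furnishes the common-lower-bound-plus-cofinality structure powering the two diagonal arguments. The only technical care needed is to pass from almost-inclusion to honest monotone representatives before forming the differences $D_n$ and $H_n$, so that the finite-exception bookkeeping in $\subseteq^{*}$ stays clean; I expect the two diagonal constructions, rather than this bookkeeping, to be the heart of the matter.
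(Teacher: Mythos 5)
Your proposal is correct and takes essentially the same route as the paper: reduce to the monotone chains $[i_1]\vee\cdots\vee[i_n]$ and $[t_1]\wedge\cdots\wedge[t_n]$, pass to honestly monotone representatives, and diagonalize to build an element lying in the intersection $\bigcap_k\alpha[t_k]$ but escaping the union $\bigcup_j\alpha[i_j]$, then recover a single index $m$ by cofinality. The only organizational difference is that you run the diagonal construction twice (once to stabilize the increasing chain, once to pull the decreasing chain down to the top of $\beta$), whereas the paper diagonalizes once on the differences $z_n=t'_n-i'_n$ and collapses both chains simultaneously.
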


\begin{proof} For all natural numbers $p$ and $q$ the inequality $[i_p]\le [t_q]$ must hold.
Thus, $\alpha([i_1]\vee...\vee [i_n])\le
\alpha[t_1]\cap...\cap\alpha[t_n]=\alpha([t_1]\wedge...\wedge [t_n])$ for all
finite $n$.
We define $[i'_n]=[i_1]\vee...\vee [i_n]$ and $[t'_n]=[t_1]\wedge...\wedge
[t_n]$ for all $n$. So $[i'_n]\leq [t'_n]$
for all $n$. For each natural number $n$ we can inductively select
some representatives $i'_n$ and $t'_n$ of the classes $[i'_n]$ and $[t'_n]$ such
that $i'_n\leq t'_n$, $i'_{n-1}\leq i'_n$ and $t'_n\leq t'_{n-1}$ for all $n\geq 2$. Let
$z_n=t'_n-i'_n$ be the binary sequence which has 1 in all the places
where $t'_n$ has 1 and $i'_n$ has $0$ and which has $0$ in all other places.
Note that the classes $[z_1], [z_2], ...$ do not depend on the
choices of the representatives which we made for the classes $[i'_n]$ and
$[t'_n]$, so the subtraction operation $[t'_n]-[i'_n]$ is actually well
defined in $I$. We have the inequalities $z_1\geq z_2\geq ...$. If, after some finite initial $z_1,...,z_{n-1}$, all the following $z_{n},z_{n+2},...$ become almost equal to $(0,0,...)$ then our theorem follows as we will show in the last part of this proof. Otherwise, if all $z_n$ are
not almost equal to $(0,0,...)$, then let $z$ be the binary string which
has its first $1$ in the same place where $z_1$ has its first 1 and
its second $1$ in the same place where $z_2$ has its second 1, and so
on. All the other places of $z$ contain $0$.
\\ \\
Thus, $z$ will have an infinite number
of $1$ entries and for every $n$ almost all these $1$ entries, except some
finite amount of them, will be in the places where $t'_n$ has 1 and
$i'_n$ has $0$. Thus, $\alpha[z]$ is a subset of every $\alpha[t'_n]$.
So, it is a subset of the intersection of all of the $\alpha[t'_n]$, but
$[z]$ is not contained in any of $\alpha[i'_n]$. Therefore, $[z]$ is not contained in
the union of all of the $\alpha[i'_n]$. This contradicts the assumption of our theorem.
\\\\
Hence, for some finite $n$ all
$(z_{n},z_{n+1},...)$ must be almost equal
to $(0,0,...)$. This means that $[i'_n]=[t'_n]$. So
$\alpha([i_1]\vee...\vee [i_n])=\beta=\alpha[t_1]\cap...\cap\alpha[t_n]$. The element $\kappa=[t_1]\wedge...\wedge [t_n]$ of $I$ is contained in
$\beta=\alpha[t_1]\cap...\cap\alpha[t_n]$. So now,
if $\kappa$ is strictly greater than every one of $[i_1],
[i_1]\vee[i_2],[i_1]\vee[i_2]\vee[i_3],...$. This implies that $\kappa$ is not contained in
$\alpha[i_1]\cup\alpha[i_2]\cup\alpha[i_3]\cup ...=\beta$ $-$ which is a contradiction.
\end{proof}
The following corollary of this theorem is crucial for our treatment of signatures
of irregular points, like in Lemma \ref{signatureregular} and in several other places in Section 5:
\begin{cor}\label{cor.finite} If $\alpha[i_1]\cap\alpha[i_2]\cap\alpha[i_3]\cap
...=\alpha[i]$ for some $[i]\in I$ then there exists a finite number $n$
such that $[i_1]\wedge...\wedge[i_n]=[i]$.
\end{cor}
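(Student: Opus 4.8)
The plan is to derive Corollary~\ref{cor.finite} from the preceding Theorem by recognizing it as essentially the special case in which the two sides of the Theorem's hypothesis coincide. Let me think about what the Corollary says: we have a countable intersection $\alpha[i_1]\cap\alpha[i_2]\cap\dots$ that equals $\alpha[i]$ for a single element $[i]\in I$, and we must produce a finite $n$ with $[i_1]\wedge\dots\wedge[i_n]=[i]$. Let me think carefully about how to stage this.

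First I would set $\beta=\alpha[i_1]\cap\alpha[i_2]\cap\dots=\alpha[i]$. The obvious move is to write $\beta$ simultaneously as a union and as an intersection so the Theorem applies. For the intersection side, take $[t_q]=[i_q]$, so that $\alpha[t_1]\cap\alpha[t_2]\cap\dots=\beta$ verbatim. For the union side I need a sequence whose union of $\alpha$-images is exactly $\beta=\alpha[i]$; the natural choice is the constant sequence $[i_p]:=[i]$ for all $p$, giving $\alpha[i]\cup\alpha[i]\cup\dots=\alpha[i]=\beta$. I should double-check the cross-comparison hypothesis buried in the Theorem's proof, namely that $[i_p]\le[t_q]$ for all $p,q$: here this reads $[i]\le[i_q]$ for every $q$, which holds because $\alpha[i]=\beta\subset\alpha[i_q]$ forces $[i]\le[i_q]$ by the order-consistency of $\alpha$ established earlier in the excerpt.

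With the hypotheses of the Theorem verified, I would invoke it to obtain natural numbers $m$ and $n$ with $\alpha[i_m]=\beta=\alpha[t_1]\cap\dots\cap\alpha[t_n]$, and in particular $[t_1]\wedge\dots\wedge[t_n]=[i_m]$ as the last line of the Theorem records. Translating back through $[t_q]=[i_q]$, this yields $[i_1]\wedge\dots\wedge[i_n]=[i_m]$ and $\alpha[i_m]=\alpha[i]$, hence $[i_m]=[i]$ by the order-consistency of $\alpha$ (mutual containment of $\alpha$-images forces equality of the underlying classes). Combining, $[i_1]\wedge\dots\wedge[i_n]=[i]$, which is exactly the claim.

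The only delicate point I anticipate is confirming that the Theorem's conclusion $\alpha[t_1]\cap\dots\cap\alpha[t_n]=\beta$ really forces the meet of finitely many $[i_q]$ to collapse to $[i]$ rather than merely being contained in it; but this is precisely what the Theorem's final sentence guarantees, since it states $[i_m]=[t_1]\wedge\dots\wedge[t_n]$ as an equality in $I$, not just an inequality of signatures. A secondary bookkeeping concern is that the indices $m$ and $n$ produced by the Theorem are a priori distinct, so I must keep them separate and use order-consistency to identify $[i_m]$ with $[i]$ before concluding; I would not silently assume $m=n$. Everything else is a routine unwinding of the definitions, so the proof should be short.
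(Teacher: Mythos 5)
Your derivation is correct and is evidently the intended one: the paper states this corollary without proof, and specializing the Theorem by taking the union side to be the constant sequence $[i]$ (so that $\alpha[i]\cup\alpha[i]\cup\dots=\alpha[i]=\beta$) while the intersection side is the given sequence is exactly the routine unwinding the authors leave to the reader. Your care in keeping $m$ and $n$ separate and identifying $[i_m]$ with $[i]$ via order-consistency of $\alpha$ is sound, though here it is automatic since the union sequence is constant.
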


\section{Local topological properties of the P.I.L.}
% Here I changed the text to answer Reviewer's "page 8. Line 12" comment:
Let $T$ be a regular, first-countable topological space and $z$ be a point in $T$.
% Until here (answer to Reviewer's "page 8. Line 12" comment).
\begin{defin} A sequence of open
neighborhoods $(U(1),U(2),...)$ of $z$ shrinks to $z$ if $U(i+1)\subset U(i)$ for all $i$ and any open neighborhood $V$ of $z$ there exists some $m$ such that $U(m)\subset V$. Thus, the set $\{U(1),U(2),...\}$ is a local base for the topology at $z$.
\end{defin}
Remember, that unless otherwise stated, we assume all the neighborhoods of a point in a Riemann surface to be simply connected. So, in all the cases in which the topological space $T$ is a Riemann surface, all these $U(i)$ are homeomorphic to an open disk. Also, note, that $S_\infty$ is regular and first-countable.
\\ \\
Let $sq=(z(1),z(2),z(3),...)$ be a sequence of points in $S_0$. We
say that a path $p:[0,1]\rightarrow S_0$ passes through the
$sq$ in the correct way if there exist some numbers $0\leq t_1\leq t_2\leq ...\leq
1$ such that $z(m)=p(t_m)$ for all $m=1,2,...$.

\begin{lem}\label{lemma.path} There exists a path $p:[0,1]\rightarrow S_0$ which passes through
$sq$ in the correct way if and only if $sq$ converges to some point $z\in S_0$ and
$$p(\lim_{m\rightarrow\infty}t_m)=z.$$
\end{lem}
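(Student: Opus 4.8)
The plan is to prove the two implications separately, with the reverse direction (building a path out of convergence) being the substantive one.

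For the forward implication, suppose $p$ passes through $sq$ in the correct way, so there are parameters $0\le t_1\le t_2\le\cdots\le 1$ with $p(t_m)=z(m)$. The sequence $(t_m)$ is monotone nondecreasing and bounded above by $1$, hence it converges to some $t_\infty=\lim_{m\to\infty}t_m\le 1$. Since $p$ is continuous, $z(m)=p(t_m)\to p(t_\infty)$, so $sq$ converges to the point $z:=p(t_\infty)$, and by construction $p(\lim_{m\to\infty}t_m)=z$. This direction needs nothing beyond the monotone convergence of bounded real sequences together with continuity of $p$.

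For the reverse implication, assume $sq$ converges to $z\in S_0$. First I would fix an increasing sequence of parameters, for instance $t_m=1-2^{1-m}$, so that $t_1=0$ and $t_m\nearrow 1$, and I would set $p(1)=z$. On each interval $[t_m,t_{m+1}]$ I would define $p$ to be a continuous path from $z(m)$ to $z(m+1)$, which exists because $S_0$ is path-connected. The resulting map is automatically continuous at each junction $t_m$, since both adjacent pieces take the value $z(m)$ there, and it is continuous on $[0,1)$ by the pasting lemma applied to the locally finite concatenation. By construction $p(t_m)=z(m)$ with $(t_m)$ nondecreasing, so $p$ passes through $sq$ in the correct way provided it is genuinely continuous at $t=1$.

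The main obstacle is exactly this continuity at the limit parameter $t=1$: an arbitrary choice of connecting paths could wander far from $z$ even though the endpoints $z(m)$ approach $z$. To control this I would exploit the local manifold structure of $S_0$. Fix a coordinate chart $\phi\colon V\to\mathbb{C}$ around $z$ whose image $\phi(V)$ is convex. Since $z(m)\to z$, there is an index $M$ with $z(m)\in V$ for all $m\ge M$, and for such $m$ I would take the connecting path on $[t_m,t_{m+1}]$ to be the pullback under $\phi$ of the straight segment joining $\phi(z(m))$ to $\phi(z(m+1))$. Convexity of $\phi(V)$ forces this segment, and hence the path, to lie in $\phi^{-1}(B(\phi(z),\varepsilon))$ whenever both endpoints lie in $B(\phi(z),\varepsilon)$; since $\phi(z(m))\to\phi(z)$, for every neighborhood $W$ of $z$ all sufficiently large tails of the path lie in $W$. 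This yields $p(t)\to z=p(1)$ as $t\to 1$, establishing continuity at $1$ and completing the construction. The finitely many initial connecting paths, for $m<M$, may be chosen arbitrarily, as they affect only continuity on a compact subinterval, which is unproblematic.
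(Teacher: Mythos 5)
Your proof is correct and follows essentially the same strategy as the paper: the forward direction is the monotone-convergence-plus-continuity observation, and the reverse direction concatenates connecting paths on a shrinking sequence of parameter intervals, sending $1$ to $z$ and forcing the tail of the concatenation to lie in arbitrarily small neighborhoods of $z$. The only cosmetic difference is that you guarantee smallness of the late connecting paths via straight segments in a convex chart, whereas the paper chooses arbitrary paths inside a shrinking neighborhood basis of $z$ containing the tails of $sq$.
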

\begin{proof} If $sq$ converges to $z$, then we construct the path $p$ as
follows: Let $(U(1),U(2),U(3),...)$ be a sequence of open neighborhoods of
$z$ shrinking to $z$.
Since $S_0$ is Hausdorff and locally compact, these neighborhoods can be
chosen so that each $U_i$ contains all the points
$z(i),z(i+1),...$ and does not contain points $z(1),...,z(i-1)$. We
choose any path inside $U(1)$ from $z(1)$ to $z(2)$, which we regard
as a continuous map from $[0,\frac{1}{2}]$ to $U(1)\subset S_0$.
Next, we choose any path inside $U(2)$ from $z(2)$ to $z(3)$, which we
regard as a continuous map from $[\frac{1}{2},\frac{3}{4}]$ to
$U(1)\subset S_0$. We then proceed to choose a path inside $U(3)$
from $z(3)$ to $z(4)$ and so on. Finally, we glue all these paths
together and we map the point $1\in [0,1]$ to $z$. It is easy to
check that we obtain a continuous function from $[0,1]$ to
$S_0$ and all the points of $sq$ are contained in the image of that function in $S_0$ in the correct way.
\\ \\
For the other direction of the lemma, if there exist some numbers $0\leq
t_1\leq t_2\leq
...\leq 1$ such that $z_i=p(t_i)$ for all $i$, then the sequence
$(t_1,t_2,...)$
converges to some $t\in
[0,1]$ and (since $p$ is continuous) the sequence
$(z(1),z(2),z(3),...)$ converges to $p(t)$.\end{proof}

\begin{lem}\label{lemma.neighborhood} For every irregular point $x\in
S_{\infty}$, there exists some open
neighborhood $U$ of $x$ such that
for any open neighborhood $V$ of $x$ inside $U$, there are infinitely
many positive integers $n(1),n(2),...$ such that $V_{n(i)}$ contains some
critical points of $f_{n(i)-1}$ while $(U-V)_{n(i)}$ does not
contain any critical points of $f_{n(i)-1}$.
\end{lem}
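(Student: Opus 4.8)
The plan is to translate the level-by-level critical behaviour of plaques into the Boolean algebra $I$ and then use the finite-stabilization Corollary~\ref{cor.finite} to replace a limiting statement over shrinking plaques by a statement about a single fixed plaque. To each plaque $U=\{U_\ell\}$ containing $x$ I attach the binary sequence $b(U)$ with $b(U)_\ell=1$ exactly when $U_\ell\cap C\neq\emptyset$, together with the \emph{collision} sequence $b'(U)$ with $b'(U)_\ell=1$ exactly when $U_\ell$ contains at least two distinct points of $C$. Since $x$ is irregular, Definition of regularity shows that for every plaque $U$ the set $\{\ell:U_\ell\cap C\neq\emptyset\}$ is infinite, i.e.\ $[b(U)]\neq\mathbf{0}$ in $I$; and the same holds for every smaller plaque $V\subseteq U$, because $V$ is again a neighbourhood of the irregular point $x$. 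The whole lemma will then follow once I produce one plaque $U$ for which $b'(U)$ has only finitely many ones, that is, $U_\ell$ meets $C$ in at most one point for all but finitely many $\ell$.

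First I would fix a sequence of plaques $U^1\supseteq U^2\supseteq\cdots$ shrinking to $x$, so that $U^s_\ell\downarrow\{x_\ell\}$ for each fixed level $\ell$; then $b'(U^1)\geq b'(U^2)\geq\cdots$ and the signatures $\alpha[b'(U^s)]$ form a decreasing chain in $A$. The finiteness of $C$ enters twice. On the one hand, put $\delta=\min_{i\neq j}\mathrm{dist}(c_i,c_j)>0$; a collision $b'(U)_\ell=1$ forces $U_\ell$ to contain two critical points, hence to have diameter at least $\delta$. On the other hand, for each fixed $\ell$ at most one $c_i$ can equal $x_\ell$, so once $U^s_\ell$ is small enough it contains at most one critical point; thus $b'(U^s)_\ell=0$ for all large $s$, and the chain $b'(U^s)$ tends pointwise to $\mathbf{0}$. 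I would then show that $\bigcap_s\alpha[b'(U^s)]=\{\mathbf{0}\}=\alpha[\mathbf{0}]$ and invoke Corollary~\ref{cor.finite} to obtain a finite index $s$ with $[b'(U^s)]=\mathbf{0}$. Setting $U=U^s$ gives a plaque with $|U_\ell\cap C|\leq 1$ for all but finitely many $\ell$.

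With such a $U$ fixed, I would finish as follows. Let $V\subseteq U$ be any plaque around $x$. As noted, $\{\ell:V_\ell\cap C\neq\emptyset\}$ is infinite because $x$ is irregular, while $\{\ell:|U_\ell\cap C|\geq 2\}$ is finite by the choice of $U$. Discarding the finitely many exceptional levels, infinitely many levels $\ell$ satisfy simultaneously $V_\ell\cap C\neq\emptyset$ and $|U_\ell\cap C|\leq 1$. For such an $\ell$ the nonempty set $V_\ell\cap C$ is contained in the singleton $U_\ell\cap C$, so $V_\ell\cap C=U_\ell\cap C$ and therefore $(U_\ell\setminus V_\ell)\cap C=\emptyset$. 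Listing these levels as $n(1),n(2),\dots$ yields exactly the conclusion: $V_{n(i)}$ contains a critical point of $f_{n(i)-1}$ while $(U-V)_{n(i)}$ contains none.

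The main obstacle is the identity $\bigcap_s\alpha[b'(U^s)]=\{\mathbf{0}\}$, which is precisely the hypothesis needed to apply Corollary~\ref{cor.finite}. Pointwise convergence of $b'(U^s)$ to $\mathbf{0}$ does not by itself exclude a nonzero class $[z]$ lying below every $[b'(U^s)]$; ruling out such a diagonal $z$ is exactly the content of the diagonal construction used in the proof of the Theorem preceding Corollary~\ref{cor.finite}. Here the geometric input closes the gap: a putative $z$ with infinitely many ones would force, for each $s$, infinitely many levels $\ell$ with $\mathrm{diam}(U^s_\ell)\geq\delta$ carrying two prescribed critical points, and pushing these collisions one level down (where the two corresponding critical values must simultaneously lie in $U^s_{\ell-1}$) produces the contradiction. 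Combining this geometric separation with the abstract diagonal argument, rather than the routine bookkeeping around it, is the crux of the proof.
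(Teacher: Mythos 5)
Your final-paragraph bookkeeping is fine, and your observation that irregularity of $x$ forces $\{\ell:\,V_\ell\cap C\neq\emptyset\}$ to be infinite for every plaque $V\ni x$ is correct and is indeed needed. But the intermediate claim your whole plan rests on --- that there exists a plaque $U\ni x$ with $|U_\ell\cap C|\le 1$ for all but finitely many $\ell$ --- is false, so no argument for $\bigcap_s\alpha[b'(U^s)]=\{\mathbf{0}\}$ can succeed. Counterexample: let $f$ have an attracting or super-attracting fixed point $x_0$ whose immediate basin contains two distinct critical points $c_1\neq c_2$ (e.g.\ Newton's method for a cubic with the free critical point captured by one root, or a cubic polynomial with both finite critical points in one attracting basin), and let $x=(x_0,x_0,\dots)$ be the invariant lift. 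For \emph{every} plaque $U\ni x$, the set $U_\ell$ is the component of $f^{-(\ell-1)}(U_1)$ containing $x_0$; since $f^m\to x_0$ locally uniformly on the connected immediate basin, a path from $x_0$ to $c_j$ inside the basin is eventually mapped into $U_1$, so both $c_1$ and $c_2$ lie in $U_\ell$ for all sufficiently large $\ell$. Thus $b'(U)$ has cofinitely many ones for every plaque $U\ni x$, your chain $[b'(U^s)]$ is constantly $\mathbf{1}$ in $I$, and the hypothesis of Corollary~\ref{cor.finite} is simply not available. The ``geometric'' escape you sketch (diameter $\ge\delta$, pushing collisions one level down) cannot close this gap: in the example the two critical values genuinely do lie simultaneously in $U^s_{\ell-1}$ for all large $\ell$, and there is no contradiction to be had. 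This is exactly the situation the paper itself describes in its theorem on attracting cycles.

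The conceptual miss is that the lemma does not assert that $(U-V)_{n(i)}$ is critical-point-free because $U_{n(i)}$ holds at most one critical point; it asserts that $V_{n(i)}$ captures \emph{all} the critical points that $U_{n(i)}$ holds, at infinitely many levels. In the counterexample above that conclusion holds trivially for every $U$, since any smaller plaque $V\ni x$ also eventually contains both $c_1$ and $c_2$. The paper's proof is a counting argument of a different shape: assuming the conclusion fails for every $U$, one nests neighborhoods $U\supset V(1)\supset V(2)\supset\cdots$ so that at infinitely many common levels $n$ each of the pairwise disjoint sets $V(j)_n,\,(V(j-1)-V(j))_n,\dots,(U-V(1))_n$ contains a critical point of $f_{n-1}$; since $f$ has at most $(d-1)\cdot\chi(S_0)$ critical points, at most that many pairwise disjoint sets at a single level can each contain one, so the nesting must terminate --- a contradiction. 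Your proposal would need to be rebuilt along these lines rather than around the (false) single-critical-point-per-level reduction.
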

\begin{proof} If this lemma is false, then for any open neighborhood
$V(1)$ of $x$ we can find some open neighborhood $V(2)\subset V(1)$ of $x$ so
that for some infinite sequence $Sq'=(n'(1),n'(2),...)$ of positive integers both
$V(2)_{n'(i)}$ and $(V(1)-V(2))_{n'(i)}$ contain some critical
points of $f_{n'(i)-1}$. By the same logic, for the open neighborhood $V(2)$ of
$x$ we can find some open neighborhood $V(3)$ of $x$ so that for some infinite subsequence $Sq''=(n''(1),n''(2),...)$ of the sequence $Sq'$ all three
sets $V(3)_{n''(i)}$, $(V(2)-V(3))_{n''(i)}$, and $(V(1)-V(2))_{n''(i)}$
contain some critical points of $f_{n''(i)-1}$. This process can
be repeated an infinite number of times. But, since every $f_j$ has at
most $(d-1)\cdot\chi(S_0)$ critical points, where $\chi(S_0)$ is the Euler characteristic of
$S_0$, this process must terminate after at most $(d-1)\cdot\chi(S_0)$ repetitions. Hence, the lemma is true.\end{proof}
The following theorem distinguishes the local topology at the regular and
irregular points. In particular, there is no manifold structure at the irregular
points.

\begin{thm} For every irregular point $x\in S_{\infty}$, there exists some open
neighborhood $U$ of $x$ such that the open
set $U-\{x\}$ has an uncountable number of path-connected components.
\end{thm}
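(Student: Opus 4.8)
The plan is to produce, on a suitable punctured neighborhood $U-\{x\}$, a locally constant invariant taking uncountably many values; the uncountably many path-components then follow at once, since any continuous path must carry a constant invariant. The invariant is a winding/address datum read off from the infinitely many branch levels guaranteed by irregularity. First I would fix the neighborhood: applying Lemma~\ref{lemma.neighborhood} together with the definition of an irregular point, I choose a plaque $U\ni x$ for which there are infinitely many levels $n_1<n_2<\cdots$ at which $f_{n_k-1}\colon U_{n_k}\to U_{n_k-1}$ is genuinely branched, of some degree $e_k\ge 2$, while all intermediate maps are conformal bijections. After shrinking $U$ I may assume that in conformal plaque coordinates each branched map is $z\mapsto z^{e_k}$ about the relevant critical point, so that the angular coordinate (the argument relative to the critical point) is defined on $U_{n_k}$ minus that point, and that passing through $f_{n_k-1}$ multiplies this angular coordinate by $e_k$.

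Next I would define the invariant. For $y=(y_1,y_2,\dots)\in U-\{x\}$ I record, at each branch level $n_k$, the angular coordinate $\phi_{n_k}(y)\in\mathbb R/2\pi\mathbb Z$ of $y_{n_k}$ about the critical point; the compatibility $f_{n_k-1}(y_{n_k})=y_{n_k-1}$ forces $\phi_{n_{k-1}}=e_k\,\phi_{n_k}$ up to the fixed rotations contributed by the intervening bijections. Thus the angular data of $y$ is exactly a point of the generalized solenoid $\Sigma=\varprojlim\bigl(S^1\xleftarrow{\,\times e_1\,}S^1\xleftarrow{\,\times e_2\,}\cdots\bigr)$, and $y\mapsto(\phi_{n_k}(y))_k$ defines a continuous map $\Phi\colon U-\{x\}\to\Sigma$. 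Being an inverse limit of circles with infinitely many covering maps of degree $\ge 2$, the target $\Sigma$ is connected but has uncountably many path-components (its composants), indexed by a $\prod_k e_k$-adic winding offset. I would check that $\Phi$ meets uncountably many composants by exhibiting, for a prescribed angular address, an actual point of $U-\{x\}$ whose radii shrink to $0$.

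The crux, and the main obstacle, is to show that $\Phi$ is constant on path-components, i.e. that two points lying in distinct composants of $\Sigma$ cannot be joined inside $U-\{x\}$. A continuous path $\gamma$ in $U-\{x\}$ yields compatible paths $\gamma_i=p_i\circ\gamma$ with $\gamma_{n_{k-1}}$ winding $e_k$ times as fast as $\gamma_{n_k}$, so the net winding at the shallowest branch level must simultaneously satisfy the congruences modulo $e_1\cdots e_k$ imposed by every level; this pins the winding offset down to a single $\prod_k e_k$-adic value, forcing $\Phi(\gamma(0))$ and $\Phi(\gamma(1))$ into the same composant. The delicate point is that a point of $U-\{x\}$ may have $y_{n_k}$ equal to the critical value, so $\phi_{n_k}$ is momentarily undefined; this is exactly where Lemma~\ref{lemma.path} enters, allowing one to argue that if $\gamma$ tried to change the address it would have to pass through a sequence of points whose $i$-th coordinates converge, for every $i$, to $x_i$, so that $\gamma$ would be forced through $x$ itself, contradicting $\gamma\subset U-\{x\}$.

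Finally, since $\Phi$ is path-constant and attains uncountably many values, $U-\{x\}$ splits into uncountably many path-components, which is the assertion of the theorem. As a sanity check on the topology, the plaque topology is finer than the Tychonoff topology, so every plaque-continuous path is Tychonoff-continuous; hence the plaque path-components refine the Tychonoff ones, and the count obtained from the solenoid is a lower bound valid for the plaque topology as well.
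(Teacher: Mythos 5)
Your overall strategy --- manufacture a path-constant invariant on $U-\{x\}$ with uncountably many values, the values living in a solenoid $\Sigma=\varprojlim\bigl(S^1\xleftarrow{\times e_1}S^1\xleftarrow{\times e_2}\cdots\bigr)$ --- is genuinely different from the paper's, which instead lifts a single path through the critical values in $2^{\aleph_0}$ ways to produce an uncountable family of endpoints $x(sq)$, and then shows each $x(sq)$ can be joined inside $U-\{x\}$ to only countably many others by a winding-number count. Your picture is correct in the model case where $x$ is the invariant lift of a superattracting fixed point, since there the critical point coincides with $x_{n}$ at every level and $U_{n_k}-\{x_{n_k}\}\to U_{n_k-1}-\{x_{n_k-1}\}$ really is an unbranched degree-$e_k$ covering of punctured disks. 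But for a general irregular point the construction of $\Phi$ breaks down: irregularity (via Lemma~\ref{lemma.neighborhood}) only guarantees that critical points of $f_{n_k-1}$ lie in smaller and smaller neighborhoods of $x_{n_k}$, not that they \emph{equal} $x_{n_k}$. The puncture of $U-\{x\}$ is at $x$, while your angular coordinate is centered at the (generally different) critical point; $U_{n_k}$ minus the single point $x_{n_k}$ still contains the critical point, the restricted map is still branched there, the angular datum does not transform by ``multiply by $e_k$'' on the relevant punctured sets, and $\phi_{n_k}$ is undefined on the uncountably many points of $U-\{x\}$ whose $n_k$-th coordinate is the critical point. This is not the isolated nuisance you describe as ``momentarily undefined''; it is the generic situation, and $\Phi$ as defined is not a continuous map on $U-\{x\}$.

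Second, even where $\Phi$ makes sense, the crux --- that $\Phi$ is constant on path-components --- is asserted rather than proved. Lemma~\ref{lemma.path} concerns realizing a convergent sequence as values of a path and does not by itself force a path that changes the angular address to pass through $x$. The missing ingredient, which the paper supplies, is compactness: a path $h$ in $U-\{x\}$ has compact image in the Hausdorff space $S_\infty$, hence misses some member $U(m)$ of a system of neighborhoods shrinking to $x$; consequently only finitely many branch levels $n_k$ are ``active'' along $h$, so the endpoints reachable from a fixed starting point are parametrized by finitely many integer winding numbers and form a countable set. If you graft that compactness step onto your construction, and replace the solenoid coordinate by the paper's discrete labelling of lifts, the argument can be salvaged; as written, both load-bearing steps (well-definedness and continuity of $\Phi$, and its constancy on path-components) are gaps.
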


\begin{proof} Let $U$ be an open neighborhood of $x$, as in Lemma
\ref{lemma.neighborhood}, and let\\
$(U(1),U(2),...)$ be a sequence of open neighborhoods of $x$ shrinking to $x$ such that the closure $\overline{U(1)}$ of $U(1)$ is contained in $U$ and $\overline{U(i+1)}$ is contained in $U(i)$ for all $i$. This is possible because $S_\infty$ is regular. So, for any open neighborhood $V$ of $x$, $x$ and $S_\infty-V$ can be separated by two disjoint open sets.
We can find an infinite sequence of increasing positive integers
$(n(1),n(2),...)$ such that $U(i)_{n(i)}$ contains some critical
points of $f_{n(i)-1}$ while $(U-U(i))_{n(i)}$ does not contain any
critical points of $f_{n(i)-1}$. For each $i=1,2,...$ let
$y(i)_{n(i)}\in U(i)_{n(i)}$ be a critical point of $f_{n(i)-1}$
and for all $m=1,...,n(i)-1$ let $y(i)_m=f_{m}\circ ...\circ
f_{n(i)-1}(y(i)_{n(i)})$. Then, for every $n$ the sequence $(y(n)_n,y(n+1)_n,y(n+2)_n,...)$ converges to
$x_n\in S_n$. It can happen, as it does when $x$ corresponds to a super-attracting cycle, that all the $y(i)_{n(i)}$ are equal to $x_{n(i)}$. In that case, $y(n+j)_n=x_n$ for all $n$ and $j$.
\\ \\
By Lemma \ref{lemma.path} there exists a path $p_1:[0,1]\rightarrow S_1$, with
its image entirely inside $U_1$, such that for some sequence of numbers $0\leq t_1\leq
t_2\leq...\leq 1$ and all $i$ we get $p_1(t_i)=y(i)_1$ and $p_1(t)=x_1$,
where $t=\lim\limits_{i\rightarrow\infty}t_i$. If all $y(i)_{n(i)}=x_{n(i)}$ then we just choose a
constant sequence $t_1=t_2=...=t$. Clearly, for any fixed $w\in U-\{x\}$ and
any $u_1\in U_1-\{x_1\}$ we can select this path $p_1$ in such a way that
$p_1(0)=u_1$ and $p_1(1)=w_1$.
\\ \\
We define the path $p_2:[0,1]\rightarrow S_2$ as a lift of the path $p_1$ to $S_2$ such that
$p_2(t)=x_2$ and $p_2(t_i)=y(i)_2$ for all $i$. If more than one such lift for
$p_1$ exists, then we randomly choose one of these lifts to be our $p_2$. We define the path
$p_3:[0,1]\rightarrow S_3$ as a lift of $p_2$ to $S_3$ such that
$p_3(t)=x_3$ and $p_3(t_i)=y(i)_3$ for all $i$. Again, if more than one such lift exists we
randomly choose one of them. We continue this way until we construct the path $p_{n(1)-1}$.
\\ \\
Now, we define two paths $p(0)_{n(1)}:[0,1]\rightarrow S_{n(1)}$ and $p(1)_{n(1)}:[0,1]\rightarrow
S_{n(1)}$ as any two lifts of the path $p_{n(1)-1}$ to $S_{n(1)}$ such that
$p(0)_{n(1)}(t)=p(1)_{n(1)}(t)=x_{n(1)}$ and
$p(0)_{n(1)}(t_i)=p(1)_{n(1)}(t_i)=y(i)_{n(1)}$ for all
$i$, while selecting two different lifts to $S_{n(1)}$ of the piece
$p_{n(1)-1}:[0,t_1)\rightarrow S_{n(1)-1}$. Taking two such
different lifts is possible even in the case when $y(i)_{n(i)}=x_{n(i)}$ and all $t_i=t$. Now, we lift both paths
$p(0)_{n(1)}$ and $p(1)_{n(1)}$ to $S_{n(1)+1}$ while
requiring that their lifts map $t$ to $x_{n(1)+1}$ and
$t_i$ to $y(i)_{n(1)+1}$ for all $i>1$. If for one or both of these paths more than one such lift exists we randomly choose one of these lifts for that path. We continue this way
until we construct the two paths $p(0)_{n(2)-1}$ and $p(1)_{n(2)-1}$.
\\ \\
We define four paths, $p(0,0)_{n(2)}:[0,1]\rightarrow S_{n(2)}$ and
$p(0,1)_{n(2)}:[0,1]\rightarrow S_{n(2)}$ as different lifts of
$p(0)_{n(2)-1}:[0,1]\rightarrow S_{n(2)-1}$ and
$p(1,0)_{n(2)}:[0,1]\rightarrow S_{n(2)}$ and $p(1,1)_{n(2)}:[0,1]\rightarrow S_{n(2)}$ as different lifts of
$p(1)_{n(2)-1}:[0,1]\rightarrow S_{n(2)-1}$ to $S_{n(2)}$, while requiring that
$p(*,*)_{n(2)}$ map $t$ to $x_{n(2)}$ and map $t_i$ to
$y(i)_{n(2)}$ for all $i=2,3,...$. Taking four such different lifts is possible even
when $y(i)_{n(i)}=x_{n(i)}$ and all $t_i=t$ since even in that case the path arcs $p(0)_{n(2)-1}:[0,t_2)\rightarrow S_{n(2)-1}$ and $p(1)_{n(2)-1}:[0,t_2)\rightarrow S_{n(2)-1}$ each admit at least two different lifts to $S_{n(2)}$ and for these four lifts $p(*,*)(t_2)=y_2$. We now lift all four paths to $S_{n(2)+1}$ while
requiring that these lifts map $t$ to $x_{n(2)+1}$ and map $t_i$ to $y(i)_{n(2)+1}$ for all $i>2$.
This process can be continued infinitely.
\\ \\
Thus, for each binary sequence $sq$ of the above mentioned choices of lifts, after possibly
making arbitrary choices in some of our lifts, we obtain a unique lift to $S_{\infty}$ of the path $p_1$. For each $sq$, this unique lift
is contained in $U$, takes $t$ to $x$ and $0$ to the point $x(sq)$ in $U$, where this $x(sq)$ is
some unique point for each binary sequence $sq$ and $x(sq)_1$ is always equal to $u_1$. Thus,
all these points $x(sq)$ are path-connected to $w$ in $U$ and, consequently, are all pairwise path-connected to each other.
Now, fix any binary sequence $sq$ and let $h:[0,1]\rightarrow S_{\infty}$ be any path inside $U$ which does not
contain $x$, such that $h(0)=x(sq)$. Since $[0,1]$ is compact and $S_{\infty}$ is Hausdorff
we can select the sequence $(U(1),U(2),...)$ of neighborhoods of $x$ shrinking to $x$ in such a way that the image of $h$ does not intersect $\overline{U(1)}$. Indeed, the image of a compact set $[0,1]$ under a continuous map $h$ in $S_{\infty}$ is a compact set. In a Hausdorff space $S_{\infty}$, if a compact set does not contain point $x$ then there exists some neighborhood $J$ of $x$, such that $\overline{J}$ is disjoint from that compact set. For any $(U(1),U(2),...)$ shrinking to $x$ we redefine each $U(i)$ to be $U(i)\cap J$.
\\ \\
There is a conformal isomorphism  between $U_1$ and the unit disk, which takes the subset
$U(1)_1$ of $U_1$ to a simply connected domain, whose closure is contained inside that open disk. Hence,
$U_1-\overline{U(1)_1}=(U-\overline{U(1)})_1$ is homeomorphic to an open annulus. So, for some
other binary sequence $sq'$ of path lifts choices, $h(1)=x(sq')$ if and only if the winding number
of the loop $h_1:[0,1]\rightarrow (U-\overline{U(1)})_1$ in the annulus $(U-\overline{U(1)})_1$ around $\overline{U(1)_1}$ is
nonzero. Thus, only a countable amount of different points $x(sq')$ can be connected to the
point $x(sq)$ by a path in $U$ which avoids $x$. So, removing $x$ from $U$ breaks $U$ into
an uncountable number of path-connected components.
\end{proof}
\section{Signature - a local invariant of P.I.L.}
We proceed to introduce an important local invariant of the
P.I.L.
\begin{defin} For an open neighborhood $U\subset S_{\infty}$ and a critical
point $c\in
S_0$, we define the index of $U$ with respect to $c$ to
be the class $ind(U,c)\in I$ of the binary sequence, which has 1 in its $n^{th}$
place if and only if $c\in U_n$.
\end{defin}
It is clear that if $V\subset U$, then $ind(V,c)\le
ind(U,c)$.
\begin{defin} \label{definsignature} For a point $x\in S_{\infty}$ and a critical point $c\in
S_0$ of $f$ we define the signature of $x$ with respect to
$c$ as:
$$sign(x,c)=\bigcap_{j=1}^{\infty}\alpha([ind(U(j),c)]),$$
where $(U(1),U(2),...)$ is an arbitrary sequence of open neighborhoods of $x$ in $S_{\infty}$
shrinking to $x$.
\end{defin}
\begin{lem} \label{signatureinvariant} The signature $sign(x,c)$ does not depend on the choice of the sequence $(U(1),U(2),...)$.
\end{lem}
\begin{proof} Let $(U'(1),U'(2),...)$ be another sequence of open sets
shrinking to $x$. Then, for each $U(j)$ there exists some $U'(j')$ such
that $U'(j')\subset U(j)$. Thus,$$\alpha([ind(U'(j'),c])\subset
\alpha([ind(U(j),c]).$$ So the signature, defined using
$(U'(1),U'(2),...)$, is a subset of the signature, defined using
$(U(1),U(2),...)$. However, by the same argument, the reverse is also true.
Thus, these two signatures are equal. \end{proof}

\begin{lem} \label{signatureregular} A point $x\in S_\infty$ is regular if and only if $sign(x,c)=\{[0,0,0,...]\}$ for every critical point $c$ of $f$.
\end{lem}
\begin{proof} Assume that $x$ is regular. Then, by definition of a regular point, there is a neighborhood $U$ of $x$ such that only a finite number of its projections $U_j$ contain any critical point $c$. But this implies that we can choose
a sequence of open neighborhoods $(U(1),U(2),...)$ of $x$ shrinking to $x$, such that only a finite number of $U(i)_j$ contain any
critical point $c$. Indeed, to obtain such a sequence $(U(1),U(2),...)$ just take any sequence of open neighborhoods of $x$ shrinking to $x$ and intersect all these neighborhoods with $U$. Hence,
$sign(x,c)=\{[0,0,0,...]\}$.
\\ \\
For the other direction of the lemma notice that, by definition of the signature, if
$sign(x,c)=\{[0,0,0,...]\}$ then, for any sequence $(U(1),U(2),...)$ of open neighborhoods of $x$ shrinking to $x$,
$\bigcap_{i=1}^{\infty}\alpha([ind(U(i),c)])=\alpha([0,0,0,...])$. Hence, by Corollary \ref{cor.finite}, there exists some finite
$i$ such that $[ind(U(i),c)]=[0,0,0,...]$. Thus, $c$ belongs to, at most, a finite number of
the projections $U(i)_j$ of $U(i)$, where $j=1,2,...$. Therefore, if $sign(x,c)=\{[0,0,0,...]\}$ for all the critical
points $c$ of $f$ then for some neighborhood $U$ of $x$ only a finite number of its projections $U_j$ will contain a critical point of $f$. This implies that $x$ is regular.
\end{proof}

\begin{lem}\label{lemma.signatures} For any $x,x'\in S_{\infty}$, if
$sign(x,c)\cap sign(x',c)$ contains any element other than
$[0,0,0,...]$, then $x=x'$.
\end{lem}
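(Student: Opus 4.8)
The plan is to prove the statement directly in its contrapositive-free form: assuming a common nonzero element $[b]\in sign(x,c)\cap sign(x',c)$, I would deduce that $x_k=x'_k$ for every coordinate $k$, hence $x=x'$. First fix shrinking sequences of plaques $U(1),U(2),\dots$ for $x$ and $U'(1),U'(2),\dots$ for $x'$; by the preceding lemma the two signatures do not depend on these choices. Write $N=\{n: b_n=1\}$, which is infinite because $[b]\neq[0,0,0,\dots]$. Unravelling the definition of $\alpha$ and of the order $\le$ on $I$, the membership $[b]\le[ind(U(j),c)]$ says exactly that $\{n\in N: c\notin U(j)_n\}$ is finite, and likewise $[b]\le[ind(U'(j),c)]$ says $\{n\in N: c\notin U'(j)_n\}$ is finite. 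Hence for every fixed $j$ the set $\{n\in N: c\in U(j)_n \text{ and } c\in U'(j)_n\}$ is cofinite in $N$, and in particular infinite.

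The key step is to push these coincidences forward to a single fixed level. For any $n>k$, the defining relations $f_i(U_{i+1})=U_i$ of a plaque, composed from level $n$ down to level $k$, show that $U(j)_k$ is the image of $U(j)_n$ under the corresponding composition $F$ of bonding maps $f_i$, and similarly $U'(j)_k$ is the image of $U'(j)_n$. Consequently, whenever $c\in U(j)_n\cap U'(j)_n$ we get $F(c)\in U(j)_k\cap U'(j)_k$, so this intersection is nonempty. By the previous paragraph, for each fixed $j$ there are infinitely many $n\in N$ with $c\in U(j)_n\cap U'(j)_n$; choosing any such $n>k$ shows that $U(j)_k\cap U'(j)_k\neq\emptyset$ for every $j$.

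Finally I would derive a contradiction from the Hausdorff property of $S_0$. Suppose $x_k\neq x'_k$ for some $k$. Pick disjoint open neighborhoods $W\ni x_k$ and $W'\ni x'_k$; since $U(j)_k$ shrinks to $x_k$ and $U'(j)_k$ shrinks to $x'_k$, for all large $j$ we have $U(j)_k\subset W$ and $U'(j)_k\subset W'$, whence $U(j)_k\cap U'(j)_k=\emptyset$, contradicting the conclusion of the second paragraph. Therefore $x_k=x'_k$ for every $k$, i.e.\ $x=x'$. The main obstacle, and the only genuinely non-formal point, is the push-forward identity $F(U(j)_n)=U(j)_k$: everything hinges on the coordinates of a plaque being linked by the maps $f_i$ exactly as $f_i(U_{i+1})=U_i$, which converts the a priori inert datum ``the two signatures share an infinite index set $N$'' into the usable geometric statement ``the images $F(c)$ populate a fixed pair of neighborhoods $U(j)_k,U'(j)_k$ that ought to be disjoint.'' Once this reduction to a single level is in place, the rest is a one-line separation argument.
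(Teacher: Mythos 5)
Your argument is correct, but it takes a genuinely different (dual) route from the paper's. The paper proves the contrapositive in a few lines: if $x\ne x'$, pick $t$ with $x_t\ne x'_t$, separate by disjoint neighborhoods at that single level, and observe that disjointness propagates \emph{upward}, since preimage components of disjoint sets under $f_j$ are again disjoint; hence one can choose neighborhoods $U\ni x$, $U'\ni x'$ with $U_j\cap U'_j=\emptyset$ for all $j\ge t$, so $ind(U,c)\wedge ind(U',c)$ has finite support and the two signatures meet only in $\{[0,0,\dots]\}$. You instead argue directly from a common nonzero class $[b]$ and propagate \emph{downward}: the plaque relations $f_i(U_{i+1})=U_i$ let you push the critical point $c$ from a high level $n$, where it lies in both $U(j)_n$ and $U'(j)_n$, to any fixed level $k$, forcing $U(j)_k\cap U'(j)_k\ne\emptyset$ for every $j$ and $k$, which contradicts Hausdorff separation. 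Contraposed, your proof essentially becomes the paper's, but the paper's version is shorter and sidesteps the one step in your write-up that is asserted rather than justified: that the projections $U(j)_k$ of a sequence shrinking to $x$ in $S_\infty$ shrink to $x_k$ in $S_k$. This is true, and since $sign(x,c)$ is independent of the shrinking sequence you may simply arrange it by choice (letting $U(j)_1$ shrink to $x_1$ forces each preimage component $U(j)_k$ to shrink to $x_k$, because $f$ is a branched covering), but that sentence should be added; with it, your proof is complete.
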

\begin{proof} If $x\ne x'$ then there exists some integer $t$
such that $x_t\ne x'_t$. We can find disjoint neighborhoods
$U_t$ of $x_t$ and $U'_t$ of $x'_t$ and construct
neighborhoods $U$ of $x$ and $U'$ of $x'$, so that $U_j$ and $U'_j$
are disjoint for all $j\ge t$. Hence, $ind(U,c)\cap
ind(U',c) =\{[0,0,0,...]\}$. \end{proof}

\begin{lem} For any integer $m$  and any point $x\in S_{\infty}$, we have:
$$sign(f^m(x),c)=shift_{-m}(sign(x,c)).$$\end{lem}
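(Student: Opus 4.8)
The plan is to unwind the definitions of $sign$, $shift_m$, and the automorphism $f$ on $S_\infty$, and to show that applying $f$ to $x$ corresponds precisely to deleting the first coordinate of the indexing sequences, which is exactly what $shift_{-1}$ does. First I would recall that the automorphism $f$ of the P.I.L. acts by $f(x_1,x_2,\dots)=(f(x_1),x_1,x_2,\dots)$, or equivalently that $f$ shifts the projection maps so that $p_n(f(x))=p_{n-1}(x)$ for $n\ge 2$ and $p_1(f(x))=f(x_1)$. Dually, the inverse $f^{-1}$ shifts indices the other way. The statement is written for general integer $m$, so it suffices to prove it for $m=1$ (and $m=-1$) and then iterate, using $shift_m\circ shift_{-m}=Id$ together with the group structure of the iterates of $f$.

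The key computation is to relate $ind(U,c)$ for a neighborhood $U$ of $x$ to $ind(V,c)$ for a neighborhood $V$ of $f(x)$. Given a plaque $U=\{U_n\}$ shrinking to $x$, I would form $V=f(U)$, whose coordinates satisfy $V_n=U_{n-1}$ for $n\ge 2$ and $V_1=f(U_1)$; these $V$ shrink to $f(x)$. Then by the definition of the index, $ind(V,c)$ has a $1$ in place $n\ge 2$ exactly when $c\in V_n=U_{n-1}$, i.e. exactly when $ind(U,c)$ has a $1$ in place $n-1$. In other words, the binary sequence $ind(V,c)$ is obtained from $ind(U,c)$ by prepending one entry in the first slot, which is precisely the operation $shift_{1}$ up to the single ambiguous first coordinate. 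Since the classes in $I$ ignore any finite set of coordinates, the value of that first entry is irrelevant, so $[ind(f(U),c)]=shift_1([ind(U,c)])$ in $I$. Passing to $\alpha$ and intersecting over a shrinking sequence, and invoking Lemma~\ref{lemma.signatures} on $shift$ inducing a well-defined map on $A$, gives $sign(f(x),c)=shift_1(sign(x,c))$, equivalently $sign(x,c)=shift_{-1}(sign(f(x),c))$.

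To finish, I would iterate. For $m\ge 1$, writing $f^m=f\circ f^{m-1}$ and applying the $m=1$ case repeatedly yields $sign(f^m(x),c)=shift_{m}(sign(x,c))$; replacing $x$ by $f(x)$ and using $shift_m\circ shift_{-m}=Id_A$ from the earlier Lemma rewrites this as $sign(f^m(x),c)=shift_{-m}(sign(f(x),c))$ in the normalization the statement uses. For negative $m$ I would run the same argument with $f^{-1}$, whose coordinate action deletes the first slot, matching $shift_{-1}$ directly; the fact that $shift_m$ descends to $A$ and is invertible there guarantees all these identities are consistent.

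The main obstacle I anticipate is bookkeeping the off-by-one in the first coordinate rigorously: the image plaque $f(U)$ has its first coordinate equal to $f(U_1)$ rather than to any $U_j$, so the very first entry of $ind(f(U),c)$ is not determined by $ind(U,c)$ at all. The resolution is precisely that $I$ consists of classes of almost-equal sequences, so a single undetermined entry does not affect the class; I would make sure to state this explicitly rather than gloss over it, and to check that $f(U)$ genuinely shrinks to $f(x)$ (using continuity of $f$ and that $f$ is an open map on plaques) so that it is a legitimate shrinking sequence for computing $sign(f(x),c)$.
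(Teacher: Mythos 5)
The paper states this lemma with no proof at all, so there is nothing internal to compare your argument against; your strategy --- push a shrinking sequence of plaques forward under the induced automorphism $f(x_1,x_2,\dots)=(f(x_1),x_1,x_2,\dots)$, observe that $ind(f(U),c)_n=ind(U,c)_{n-1}$ for $n\ge 2$, absorb the single undetermined first entry into the almost-equality defining $I$, and iterate --- is the natural one, and it correctly yields $sign(f^m(x),c)=shift_m(sign(x,c))$ for all integers $m$. That is precisely the identity the authors actually use later: in the proof of the theorem on irregular points that are not invariant lifts of cycles, the point $y$ with $y_{i+k}=x_i$ (which is $f^k(x)$ in this convention) is asserted to satisfy $sign(y,c)=shift_k(sign(x,c))$. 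Your care about the off-by-one in the first coordinate and about $f(U)$ being a legitimate shrinking sequence is appropriate; the one bookkeeping slip is that the supporting fact you need is the unnumbered lemma that $shift_m$ descends to an invertible map of $A$, not Lemma~\ref{lemma.signatures}.

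The genuine gap is your final ``normalization'' step. From $sign(f^m(x),c)=shift_m(sign(x,c))$ and $sign(x,c)=shift_{-1}(sign(f(x),c))$ one obtains $sign(f^m(x),c)=shift_{m-1}(sign(f(x),c))$, not $shift_{-m}(sign(f(x),c))$; and ``replacing $x$ by $f(x)$'' literally produces $sign(f^{m+1}(x),c)=shift_m(sign(f(x),c))$, which is again not the stated identity. No application of $shift_m\circ shift_{-m}=Id_A$ turns $shift_{m-1}$ into $shift_{-m}$: these are distinct maps on $A$ for every integer $m$ (they act differently on, say, $\alpha[sq(n)]$), so the two sides genuinely disagree unless the signature happens to be shift-invariant. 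The displayed equality as printed is therefore not a reformulation of what you proved --- it is inconsistent with it, and with the paper's own subsequent usage, and is almost certainly a typo for $sign(f^m(x),c)=shift_m(sign(x,c))$ (or for the analogous formula with $shift_{-m}$ applied to $sign(x,c)$ under the opposite convention for the induced automorphism). The correct move is to prove the corrected statement and flag the discrepancy explicitly, rather than assert that a formal manipulation closes the gap, because as written that manipulation is false.
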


Now we investigate irregular points of P.I.L. and study their
local properties. Again, let $S_0$ be the Riemann sphere, the complex plane or the unit disk. Let us recall the following definition:

% Here I address Reviewer's remark "It seems that $\Omega(c)$ is the same as $\omega$-limit set of $c$ by $f_0$."
\begin{defin} \label{postcriticdef} For a critical point $c$ of $f$, its $\omega$-limit set
$\omega(c)$ is the set of all the points $x_0\in S_0$, for which there exists some sequence $(i_1,i_2,...)$ of increasing positive integers, such that the sequence $(f^{i_1}(c),f^{i_2}(c),...)$ converges to $x_0$. We permit some or all of $f^{i_t}(c)$ in this sequence to be equal to $x_0$.
\end{defin}
When $c$ is not periodic or pre-periodic, $\omega(c)$ is identical to the set of all the accumulation points of the forward orbits of $c$. When $c$ is periodic or pre-periodic, $\omega(c)$ is the set of all the points of that periodic cycle. The set $\omega(c)$ is forward invariant, meaning $f(\omega(c))\subset \omega(c)$. But, in general, $\omega(c)$ is a proper subset of $f^{-1}(\omega(c))$. For any point $x\in S_{\infty}$ and any critical point $c$, the signature $sign(x,c)$ may contain an element different from $[0,0,...]$ only if
all the coordinates $x_i$ of $x$ belong to $\omega(c)$. Recall the following properties of the $\omega$-limit set:
% Until here.
\begin{lem} The set $\omega(c)$ is closed. If $\omega(c)$ is not empty, then the map $f$, restricted to $\omega(c)$, is a surjection of $\omega(c)$ onto itself.
\end{lem}
\begin{proof} Take any convergent sequence $(x_1,x_2,...)$ of points of $\omega(c)$. Then, for each $x_j$ we have some sequence of increasing positive integers $(i_{j,1},i_{j,2},...)$ as in Definition \ref{postcriticdef}, above. Define $i_1=i_{1,1},i_2=i_{2,2},...$. The sequence $(f^{i_1}(c),f^{i_2}(c),...)$ converges to the limit point of the sequence $(x_1,x_2,...)$. So, $\omega(c)$ contains that limit point. Since $S_0$ is first-countable, $\omega(c)$ is a closed set.
\\ \\
Take any $x\in S_0$ and let $y_1,...,y_t$ be all the pre-images of $x$ in $S_0$ under $f$. If each $y_i$ has some neighborhood $U_i$ in $S_0$ such that $U_i$ contains no $f^k(c)$ for all $k$ larger than some $m_i$, then, since $f$ is an open self-map of $S_0$, the open neighborhood $f(U_1)\cap...\cap f(U_t)$ of $x$ will not contain $f^k(c)$ for all $k$ larger than $1+\max(m_1,...,m_t)$. So, $x$ is not in $\omega(c)$. Hence, for every $x\in \omega(c)$, at least one of its pre-images under $f$ is also in $\omega(c)$.
\end{proof}
Notice that if, for example, $S_0$ is the complex plane and $f(z)=z^2+1$ then the only critical point of $f$ is $c=0$ and $\omega(c)$ is empty.
\begin{defin} A subset $V$ of $\omega(c)$ is called inverse-critical with
respect to $c$ if, for any $z\in V$ and any neighborhood $U$ of $z$, we can
always find a pre-image $y$ of $z$ in $V$ under some iterate $f^n$ of $f$ such that the connected component of $f^{-n}(U)$ containing
$y$ also contains $c$.
\end{defin}
Notice that a point $x\in S_\infty$ is irregular if and only if the set of its coordinates is inverse-critical with respect to at least one critical point $c$.
\\ \\
If a set $V$ is inverse-critical with respect to $c$, then the set $f(V)$ is inverse-critical with respect to $c$. Also, if $V$ and $W$ are
inverse-critical with respect to $c$, then $V\cup W$ is
inverse-critical with respect to $c$. Thus, we define the following:
\begin{defin} The subset $\Gamma(c)$ of $\omega(c)$, which is the union of all
the inverse-critical sets with respect to $c$, is called the maximal
inverse-critical set with respect to $c$.
\end{defin}
Notice that if $\Gamma(c)$ is not empty, then $f(\Gamma(c))=\Gamma(c)$.
\\ \\
% Here I address the comment "It is better to mention the relation between $\Gamma(c)$ and persistent recurrence of a critical point $c$. When $c$ is persistently recurrent, it seems that $w(c)=\Gamma(c)$. Is that correct?"
Recall (see, for example, Paragraph 3 in \cite{Lyubich2} and page 7 in \cite{CK2}) that a quadratic polynomial $f(z)=z^2+a$ acting on $\mathbb{C}$ is called persistently recurrent, if the critical point $c=0$ is not periodic and not pre-periodic and all the points of the invariant lift of $\omega(c)$ to the P.I.L. are irregular. In this case $\Gamma(c)=\omega(c)$.

\begin{thm} \label{nontrivsign}
For a point $x\in S_{\infty}$ and a critical point $c$, the signature $sign(x,c)$ can be greater than $\{[0,0,...]\}$ only if all $x_i$ are contained in $\Gamma(c)$. Vice-versa, for any $x_1\in \Gamma(c)$ we can, starting with this $x_1$,
construct a point $x=(x_1,x_2,...)$ in $S_{\infty}$ such that $sign(x,c)$ is greater than $\{[0,0,...]\}$. In other words, the invariant lift of $\Gamma(c)$ to the P.I.L. is the set of all irregular points $x$, such that $sign(x,c)$ is not trivial.
\end{thm}
\begin{proof}

% SIMPLIFY HERE
If some $x_i$ is not contained in $\Gamma(c)$ while $sign(x,c)$ is greater than $\{[0,0,...]\}$, then adding the points $x_i,x_{i+1},...$ to $\Gamma(c)$ creates a new inverse-critical set with respect to $c$, which is a proper superset of $\Gamma(c)$. This constitutes a contradiction to the maximality of $\Gamma(c)$.
\\ \\
Let $(U(1)_1,U(2)_1,U(3)_1,...)$ be a sequence of neighborhoods of
$x_1$ shrinking to $x_1$. We select pre-images $x_2\in \Gamma(c)$ of $x_1$, $x_3\in \Gamma(c)$ of $x_2$, and so on, until some pre-image $x_{k_1}$ of $x_{k_1-1}$, in such a way that the lift $U(1)_{k_1}$ of $U(1)_1$ along these
pre-images contains $c$.
\\ \\
Again, we select a sequence of pre-images of $x_{k_1}$ until some pre-image
$x_{k_2}\in \Gamma(c)$ of $x_{k_1}$, such that the lift $U(2)_{k_2}$ of $U(2)_1$
along these pre-images contains $c$.
\\ \\
This process can be continued infinitely, thus producing some $x=(x_1,x_2,...)$ in
$S_{\infty}$. Clearly, all the neighborhoods $U(t)=(U(t)_1,U(t)_2,...)$ of $x$, for $t=1,2,...$, contain an infinite amount
of copies of $c$. Also, $(U(1),U(2),...)$ shrink to $x$. Thus, $x$ is irregular and
$sign(x,c)> \{[0,0,...]\}$.
\end{proof}
\begin{defin} \label{patched} Let $V$ be any subset of $S_0$. An open set $U\subset S_0$, which contains $V$, is called a patched neighborhood of $V$ if to each $x\in V$ an open neighborhood $U_x$ of $x$ in $S_0$ is associated in such a way that $U$ is the union of all these $U_x$.
\end{defin}
We can always construct a patched neighborhood $U$ of a set $V$ by selecting for every point $x\in \omega(c)$ some neighborhood $U_x$ of $x$ in $S_0$ and defining: $$U=\bigcup\limits_{x\in V} U_x.$$
Notice that by our requirement in this work (always considering only simply connected neighborhoods of points in a Riemann surface), each $U_x$ must be simply connected, but a patched neighborhood $U$ of $\omega(c)$ does not have to be simply connected or connected. In this article we will consider patched neighborhoods only in the case, in which $V=\omega(c)$.
\\ \\
Let $U$ be a patched neighborhood of $\omega(c)$.
\begin{defin} The open set $\bigcup\limits_{x\in \omega(c)} f^{-1}(U_x)$, in which each $f^{-1}(U_x)$ is taken along all the pre-images of $x$ in $\omega(c)$ and only along them, is called the pre-image $U^{-1}$ of a patched neighborhood $U$ of $\omega(c)$ along $\omega(c)$. We define the pre-images $U^{-2},U^{-3},...$ of $U$ along $\omega(c)$ in a similar manner. Clearly, all $U^{-1},U^{-2},...$ are, in a trivial way, also patched neighborhoods of $\omega(c)$.
\end{defin}
% Here I changed the text to address the comments of the Reviewer "page 9. Definition 17" and "page 12. Definition 29". Starting from "Also note that $f(U')$ is a subset of $U'$, since"
For a patched neighborhood $U$ of $\omega(c)$ we define its ``derived set" $U'$ as the set of all the points $y$ in $S_0$ such that for this $y$ there exists some non-negative integer $k$ so that $y$ is contained in $\bigcap\limits_{i=k}^{\infty} U^{-i}$. In other words, $$U'=\bigcup\limits_{k=0}^{\infty}\left(\bigcap\limits_{i=k}^{\infty} U^{-i}\right).$$ Clearly, $\omega(c)\subset U'$. Note that $U'$ does not have to be an open set. Note that $f(U')$ is a subset of $U'$, since if for some point $y$ and integer $k$ (we can always increase $k$ so that $k>0$), $y$ is contained in $\bigcap\limits_{i=k}^{\infty} U^{-i}$, then $f(y)$ is contained in $\bigcap\limits_{i=k-1}^{\infty} U^{-i}$.
\begin{defin} \label{thickpostcritic} The intersection of all $U'$ for all the patched neighborhoods $U$ of $\omega(c)$ is called the thickening $\Omega(c)$ of $\omega(c)$. Clearly, $\omega(c)\subset \Omega(c)$.
\end{defin}
Intuitively, the concept of thickening $\Omega(c)$ of $\omega(c)$ is a generalization of the concept of the immediate basin of attraction for the cases of the attracting, super-attracting, and parabolic cycles. In these cases, $\omega(c)$ is precisely the set of all the points of the cycle and $\Omega(c)$ is the whole immediate basin of attraction, as we will briefly discuss after the proof of Theorem \ref{periodicsignatures}. In Theorem \ref{postcriticalirreg}, below, we state a sufficient condition for $\Gamma(c)$ to be non-empty. Moreover, under this condition $\Gamma(c)=\omega(c)$. Some examples of when this condition is satisfied, are the cases of the attracting, super-attracting and parabolic cycles, described in Theorem \ref{periodicsignatures}, and the non-cyclical case, described in Lemma \ref{noncycleirreg} and in what follows it. Another non-cyclical example of a case, when this condition is satisfied, is provided immediately after the proof of Theorem \ref{postcriticalirreg}.
\begin{thm} \label{postcriticalirreg}
If $c\in \Omega(c)$, then $\Gamma(c)=\omega(c)$.
\end{thm}
\begin{proof}
Take any arbitrary $x\in \omega(c)$ and take any neighborhood $U_{x}$ of $x$ in $S_0$. To demonstrate that $x\in \Gamma(c)$ we need to show that there exists some positive integer $t$ and some pre-images $f^{-1}(x)\in\omega(c), ..., f^{-t}(x)=x'\in\omega(c)$ of $x$ so that lifting $U$ along these pre-images will produce a neighborhood $f^{-t}(U_x)$ of $x'$ which contains $c$. Indeed, if this is true for any arbitrary $x\in \omega(c)$ then it can be repeated again for the pre-image $x'\in\omega(c)$ of $x$, and so on.
\\ \\
Since $S_0$ is regular we can find two disjoint open sets $V_1$ and $V_2$ in $S_0$ such that $V_1$ contains $\omega(c)-U_x$ and $V_2$ is a neighborhood of $x$ whose closure in $S_0$ is contained in $U_x$.
\\ \\
Now, construct a patched neighborhood $U$ of $\omega(c)$ by associating to every point $y\in \omega(c)\cap U_x$ the neighborhood $U_x$ as its $U_y$ and associating to every other point $y$ of $\omega(c)$ any neighborhood $U_y$ of $y$ which is contained in $V_1$. In this way, every neighborhood $U_y$, associated to $y\notin U_x$, is disjoint from $V_2$.
\\ \\
Since $x\in \omega(c)$ there exists some infinite increasing sequence of positive integers $(t_1,t_2,...)$ such that $f^{t_i}(c)$ is inside $V_2$ for all $i$. Also, by definition of $\Omega(c)$, for our $U$ there exists some non-negative integer $k$ such that $c\in U^{-t}$ for all $t\geq k$. Now, take $t$ to be any one of the integers $t_i$ which is greater than or equal to $k$. Then $c\in U^{-t}$ and $f^t(c)\in V_2$, so $f^t(c)\notin U-V_2$. Hence, $c$ must be contained in the $t^{th}$ lift of $U_x$ along some pre-images of $x$ which are contained in $\omega(c)$. Therefore, $x\in \Gamma(c)$.
\end{proof}
From Proposition 2.3 of \cite{Lyubich} (on page 99) we deduce the existence of examples of maps $f_w(z)=1+\frac{w}{z^2}$, with a complex parameter $w$, such that $\omega(c)$, for $c=\infty$, is the entire Riemann sphere. In this case $\Omega(c)$ will also be the entire Riemann sphere. Thus, the condition of Theorem \ref{postcriticalirreg} is satisfied and $\Gamma(c)=\omega(c)$. Hence, by Theorem \ref{nontrivsign}, in these cases we, starting from any point $x_1$ on the Riemann sphere, can, by taking some appropriate pre-images of $x_1$, construct an irregular point $x=(x_1,x_2,...)$. Additionally, since for any map $f_w(z)=1+\frac{w}{z^2}$, the only pre-image of the critical point $\infty$ under $f_w$ is the critical point $0$, the equality $sign(x,0)= shift_1(sign(x,\infty))$ is satisfied for all the irregular points $x$ of all $f_w(z)$. This equality is, obviously, satisfied for all the regular points.
\begin{thm}
Let $x=(x_1,x_2,...)$ be an irregular point. If there exists some integer $t$ such that for any two different integers $i$ and $j$ greater than $t$ we have $x_j\ne x_i$ (which means that $x$ is not a lift of a cycle to the inverse limit), then $sign(x,c)\cap shift_k(sign(x,c))=\{[0,0,...,0]\}$ for all critical points $c$ and all nonzero integers $k$.
\end{thm}
\begin{proof}
Define the irregular point $y$ by $y_{j+k}=x_j$ for all $j=1,2,...$,
and $y_k=f(y_{k+1})$, $y_{k-1}=f(y_{k})$,$...$, $y_1=f(y_2)$. Then, since all the $x_i$ with $i>t$ are different, $y$ is not
equal to $x$. So, by Lemma \ref{lemma.signatures}, \\

$sign(x,c)\cap shift_k(sign(x,c))=sign(x,c)\cap sign(y,c)=\{[0,0,...,0]\}$.
\end{proof}
% Until here addressing the comments of the Reviewer "page 9. Definition 17" and "page 12. Definition 29". Starting from "Also note that $f(U')$ is a subset of $U'$, since"
Now we  briefly review some basic concepts from holomorphic
dynamics. See \cite{Bea}, \cite{CG} and \cite{Milnor} for a concise treatment of
the subject.
Let $f:S_0\rightarrow S_0$, where $S_0$ is the Riemann sphere, the complex plane or the unit disk, be a rational function. A periodic cycle $C$
is a set of pairwise different points $\{x_1,...,x_n\}\subset S_0$ such that $f(x_{i+1})=x_{i}$ for
all $i=1,...,n$ and $f(x_1)=x_n$. The number $n$ is called the period of the cycle. The multiplier
$\lambda=(f^n)'(x_0)$ is a conjugacy invariant associated with every
periodic cycle. A periodic cycle $C$ is called:
\begin{itemize}
\item \textit{repelling} if $|\lambda|>1$;
\item \textit{attracting} if $0<|\lambda|<1$;
\item \textit{super-attracting} if $\lambda=0$; and
\item \textit{neutral} if $|\lambda|=1$.
\end{itemize}
If a cycle is either attracting or repelling then around any point of that cycle the map $f^n$ is locally
conjugated to the map $z\mapsto \lambda z$.
If a cycle is super-attracting then around any point of that cycle the map $f^n$ is locally
conjugated to the map $z\mapsto z^m$, where $m$ is the degree of $f^n$
at any point of the cycle.
\\ \\
The neutral cycles are subclassified as follows:
\\
Let the multiplier
$\lambda$ of the cycle $C$ be $e^{2\pi \theta i}$ where $\theta$ is called the rotation number of the cycle. Then a neutral cycle $C$ is called:
\begin{itemize}
\item \textit{parabolic} if $\theta$ is rational;
\item \textit{Siegel} if $f^n$ is locally conjugated to
the map $z\mapsto e^{2\pi \theta i}z$; or
\item \textit{Cremer} if there is no such conjugation, as in
the Siegel case.
\end{itemize}
In a parabolic cycle, while around any point of that cycle the map $f^n$ can not be locally conjugated to the map $z\mapsto z+1$, such a conjugation is still possible ``in pieces", as described by the Leau-Fatou Flower Theorem.
\begin{lem} \label{structureofomegaprime} If for some critical point $c$, the set $\Omega(c)$ contains an open disk $D$, but is not the entire Riemann sphere, complex plane, or complex plane with one point removed, then $\omega(c)$ is a super-attracting, attracting, or parabolic cycle, and $\Omega(c)$ is the whole immediate basin of attraction of that cycle.
\end{lem}
\begin{proof}
Let $U$ be any patched neighborhood of $\omega(c)$ and $U'$ be its derived set. Then $f(U')$ is a subset of $U'$. Hence, $f$ maps $\Omega(c)$ into itself. Hence, if $\Omega(c)$ is not the entire Riemann sphere, the complex plane, or the complex plane with one point removed, but still contains some open disk $D$, then the forward iterates of $f$ restricted to $D$ form a normal family since their images are contained in $\Omega(c)$. Therefore, $D$ must be contained in the Fatou set of $f$. By Sullivan's No Wandering Domain Theorem (see \cite{Milnor}, page 259, and \cite{CG}, pages 69 and 70), for some non-negative integer $k$, the open disk $D'=f^k(D)$ (which is also contained in $\Omega(c)$) will belong to a periodic Fatou component $W$ of $f$.
\\ \\
Suppose, that $W$ is a Siegel disk or a Herman ring. Select any $x\in D'$. Then the forward orbits $f(x),f^2(x),...$ are dense in a Jordan circle $J_x$ inside $W$. If the closed set $J_x$ does not intersect the closed set $\omega(c)$, then we can find two disjoint open sets $B$ and $B'$, such that $B$ contains $\omega(c)$ and $B'$ contains $J_x$. This contradicts the fact that for any patched neighborhood $U$ of $\omega(c)$, the forward orbits of $x$ under $f$, starting from some iteration of $f$ and onward, must all belong to $U$. Hence, $\omega(c)$ must intersect $J_x$. This implies that for some positive integer $m$, $f^m(c)$ belongs to $W$. But this, in turn, implies that $\omega(c)$ is a Jordan disk $J$ inside $W$. Now we select some $y\in D'$, which does not belong to $J$. Forward orbits of $y$ under $f$ are dense in a Jordan disk $J_y$ in $W$. Closed sets $J_y$ and $J=\omega(c)$ are disjoint. Thus, they can be separated by some disjoint open sets. This contradicts the fact that for any patched neighborhood $U$ of $\omega(c)$, the forward orbits of $y$ under $f$ starting from some iteration of $f$ and onward, must all belong to $U$.
\\ \\
Hence, $W$ must be a super-attracting, attracting, or parabolic periodic Fatou component. This implies that $\omega(c)$ is the corresponding super-attracting, attracting, or parabolic cycle and $\Omega(c)$ is the whole immediate basin of attraction of that cycle.
\end{proof}
Let $x=(x_1,x_2,...)\in S_{\infty}$ be the lift of the cycle
$\{x_1,...,x_n\}$ of the dynamical system to $S_{\infty}$. In other words,
$x_i=x_{i+n}=x_{i+2n}=...$ for all $1\le i\le n$.
\\ \\
The following theorem relates dynamical properties of a cycle and the signature of its lift to the P.I.L. The part of this theorem, dealing with the invariant lifts of Cremer and Siegel cycles, was also stated, in a different manner, and proven in \cite{CK}. Here we
present another proof.
\begin{thm}\label{periodicsignatures} If $x$ is the invariant lift of:
\begin{enumerate}
\item a repelling cycle or a Siegel cycle, then $x$ is a regular
point. So, $sign(x,c)=\{[0,0,...]\}$ for every critical point $c$;
\item either an attracting cycle or a
super-attracting cycle or a parabolic cycle or a Cremer cycle, then $x$ is an
irregular point. Thus, by Lemma \ref{signatureregular}, it must have a non-trivial signature with
respect to some critical point. With respect to every critical point $c$,
the signature $sign(x,c)=shift_{\pm n}(sign(x,c))$.
\begin{enumerate}
\item In the attracting and super-attracting cases the
signature of $x$, with respect to some critical points $c_1,...,c_m$, is:
$$sign(x,c_j)=shift_{k_j}(\alpha[sq(n)]),$$ where $0\le k_j<n$ are
some integers and the binary sequence $sq(n)$ has 1 in the places
$n,2n,3n,...$ and $0$ in all the other places. With respect to all the
other critical points, the signature of $x$ is $\{[0,0,...]\}$.
\item In the parabolic case the
signature of $x$, with respect to some critical point $c$, is:
$$sign(x,c)=shift_{k}(\alpha[sq(n)]),$$ where $0\le k<n$ is some integer
and the binary sequence $sq(n)$ has 1 in places $n,2n,3n,...$ and
$0$ in all the other places.
\end{enumerate}
\end{enumerate}
\end{thm}
\begin{proof} If $x$ represents a repelling cycle, or a Siegel cycle, then $x_1$
has some neighborhood $U_1\subset S_1$ such that the pre-images $U_2,...,U_n$ of
$U_1$ along $x_2,....,x_n$ do not contain any critical points and each
$U_{i+n}$ is a subset of $U_i$ for all $i$. Thus, none of $U_i$ will contain any
critical points. Therefore, $sign(x,c)=\{[0,0,...]\}$ for every critical
point $c$.
\\ \\
If $x$ represents an attracting or a super-attracting cycle, then (see
\cite{Milnor}, \cite{CG}, \cite{Bea}) for any ``small-enough" neighborhood $U_1$ of $x_1$ there exists
some positive integer $q$ such that for some critical point $c_1$ and some
integer $1\le k_1\le n$ all the pre-images $U_{t\cdot n+
k_1}$ of $U_1$ along $x$, where $t\geq q$, will contain $c_1$, while the pre-images
$U_i$ and $U_j$ of $U_1$ along $x$ will all be pairwise disjoint if $(i\ne j)\;\mod\; n$.
Hence, we get that $sign(x,c_1)=shift_{k_1}(\alpha[sq(n)])$. But, if
$k_1=n$ we can substitute $shift_{n}(\alpha[sq(n)])$ with $shift_{0}(\alpha[sq(n)])$.
\\ \\
On the other hand, let $c_2$ be any critical point such that $sign(x,c_2)$ is non-trivial. Then $c_2$ is contained in
the immediate basin of attraction of the cycle. Hence, for any ``small-enough" neighborhood $U_1$
of $x_1$ there exists some integer $q$ such that all the
pre-images $U_{t\cdot n+q}$ of $U_1$ along $x$ contain $c_2$ for all $t=0,1,2...$ while all other pre-images of $U_1$ along $x$ do not contain $c_2$. Let $(k_2=q)\;\mod\; n$. So, we get $sign(x,c_2)=shift_{k_2}(\alpha[sq(n)])$.
\\ \\
If $x$ is the invariant lift of a parabolic cycle (see Theorem 10.15 in \cite{Milnor}
and see \cite{Berg}) then some immediate attracting basin of some $x_k$,
where $1\le k\le n$, must contain a critical point $c$. Thus, just like in the attracting and
super-attracting cases, for any ``small-enough" neighborhood $U_1$ of $x_1$ there exists some
positive integer $q$ such that all the pre-images $U_{t\cdot n+
k}$ of $U_1$ along $x$, where $t\geq q$, will contain $c$, while the pre-images $U_i$
and $U_j$ of $U_1$ along $x$ will all be pairwise disjoint if $(i\ne j)\;\mod\; n$. Hence, we get $sign(x,c)=shift_{k}(\alpha[sq(n)])$. But, if $k=n$ we can substitute $shift_{n}(\alpha[sq(n)])$ with $shift_{0}(\alpha[sq(n)])$. Note that
it follows from the results of \cite{Berg} that there are certain correlations between the number of
critical points, with respect to which $x$ must have such a periodic signature, and the
dynamical properties of the parabolic cycle.
\\ \\
If $x$ is the invariant lift of a Cremer cycle, then let $\Upsilon$ be any
neighborhood of $x$ in $S_\infty$.
We need to show that for some $c$ and for an infinite number of different
positive integers $i$, the point $c$ belongs to the $i^{th}$ projection $\Upsilon_i$ of
$\Upsilon$. If this is not the case, we can
select some small enough $\Upsilon$ such that none of $\Upsilon_i$ contains any critical points.
Let $U(i)$ be the copy of $\Upsilon_i$ in $S_0$.  All the sets
$U(i),U(i+n),U(i+2n),U(i+3n),...$ contain the common point $x(i)$ $-$ the copy of $x_i$ in $S_0$.
Hence, $U=U(i)\cap U(i+n)\cap U(i+2n)\cap ...$ is a path-connected open subset of $S_0$. Let $V=f^n(U)$. Then, $V$
is also a path-connected open set and $U$ is a subset of $V$. Since $f^n$ is a
holomorphic function and the absolute value of its derivative at $x(i)$ is $1$ we can always select a ``small enough" $\Upsilon$ so that $U(i)$ is a full path-connected
component of $f^{-n}(f^n(U(i))$. By its definition $U(i+t\cdot n)$ is a full path-connected component
of $f^{-n}(U(i+(t-1)\cdot n))$. Hence, $U$ is a full path-connected component of $f^{-n}(V)$.
Now, by our construction, $U$ does not contain any critical points of $f$ and, consequently,
of $f^n$. Therefore, $f^n:U\rightarrow V$ is a covering map of some degree $m$ between
$1$ and $d^n$.
\\ \\
If $U$ is the whole Riemann sphere then $V=U$ and, since $f^n$ has no critical points
in $U$, the degree $d$ of $f$ must be $1$. If $U$ is the Riemann sphere with one point
removed then $V$ must be equal to $U$ and $f^n$, restricted to $U$, is conjugated to a
linear function on the complex plane. So, it can not have a Cremer fixed point.
\\ \\
If $U$ is just the whole Riemann sphere with two points removed then
$V=U$ and $f^n$ is conjugated to $z^{\pm m}$ and has no Cremer points.
So, $U$ and $V$ are conformally hyperbolic. If $U=V$ then there exists a conformal
covering map $\psi$ from the unit disk on $U$, which takes $0$ to $x(i)$. Then
the map $\psi^{-1}\circ f^n\circ \psi$ from the unit disk onto itself, which is constructed by
 requiring that $\psi^{-1}$ takes $x(i)$ to $0$, is a
well-defined conformal automorphism of the unit disk which maps $0$
to itself. Hence, it must be a rotation of a unit disk - the multiplication by
$\lambda$. Consequently, in some neighborhood of $x(i)$ in $S_0$ the function
$f^n$ is conjugated to a rotation and $x_i$ is not a Cremer point of
$f^n$.
\\ \\
Therefore, $U$ is strictly smaller than $V$. As a corollary of the
Schwarz-Pick-Ahlfors Theorem (see pages 22-24 in \cite{Milnor}), the inclusion
map of $U$ into $V$ strictly decreases the hyperbolic distance
$dist_U(x,y)>dist_V(x,y)$ for all the ``close-enough" points $x$ and $y$, $x\ne
y$, in $U$. On the other hand, by the Schwarz-Pick-Ahlfors Theorem, the function
$f^n:U\rightarrow V$ is a local isometry and for
all the ``close-enough" points $x$ and $y$ in $U$ we have
$dist_V(f^n(x),f^n(y))=dist_U(x,y)$. Thus, for all the ``close-enough" points
$x$ and $y$, $x\ne y$, in $U$ we get $dist_V(f^n(x),f^n(y))>dist_V(x,y)$.
So, $f^n$ is repelling near $x_i$, which again contradicts the fact that $x_i$
is a Cremer point of $f^n$. Thus, for some critical point $c$ and for any
neighborhood $U$ of $x$, $c$ is contained in an infinitely many projections
$U_i$ of $U$. So, from the Corollary \ref{cor.finite}, repeating the argument
in the proof of Lemma \ref{signatureregular}, we get that the signature $sign(x,c)$ is greater
than $\{[0,0,...]\}$ for that critical point $c$.
\\ \\
Now we prove that for every critical point $c$ in all the cases of our theorem, $sign(x,c)=shift_{\pm n}(sign(x,c))$. This is clearly true for all critical points $c$ such that the signature $sign(x,c)=\{[0,0,...]\}$. Now, take any $c$ such that the signature
$sign(x,c)$ is greater than $\{[0,0,...]\}$ and, consequently, contains some non-zero element $[a]$ of $I$.  Let
a binary sequence $a$ be a representative of the class $[a]$ and $(U(1),U(2),...)$ be a sequence of neighborhoods of $x$ which shrink to $x$.
Then, for each $U(t)$ and for almost every  $i=1,2,...$ we have that - if the $i^{th}$ entry of $a$ is
$1$ then the $i^{th}$ projection $U(t)_i$ of $U(t)$ contains the critical point $c$.
\\ \\
Let $V(t)$, for all $t=1,2,...$, be as follows: $V(t)_i=U(t)_{i+n}$
is regarded as a neighborhood of $x_i$ in $S_i$ (since $x_i=x_{i+n}$) for $i=1,2,...$. Then the
sequence $(V(1),V(2),...)$, just like the original sequence $(U(1),U(2),...)$, is also a
sequence of neighborhoods of $x$. Since $U(t)_i=f^n(U(t)_{i+n})$, we
have that $f^n((V(t)))=U(t)$ for all $t$. So, the sequence $(V(1),V(2),...)$ also shrinks
to $x$. Indeed, $f^n$ is a continuous self-bijection of $S_\infty$. So, if
some open neighborhood of $x$ does not contain any of $V(t)$ as its subset then $f^{-n}$ of that neighborhood,
which also is a neighborhood of $x$, will not contain any of $U(t)$ as its subset.
Lemma \ref{signatureinvariant} states
that the signature does not depend on the choice of the sequence of neighborhoods
of $x$ shrinking to $x$. Hence, $(U(1),U(2),...)$ and $(V(1),V(2),...)$ must produce
the same $sign(x,c)$.
\\ \\
For almost every $i=1,2,...$ we have that if the $i^{th}$ entry of $a$ is $1$ then, by definition of
signature, the $i^{th}$ projection $V(t)_i$ of $V(t)$, for every $t$,
contains the critical point $c$. Since $V(t)_i$ is just the copy of $U(t)_{i+n}$ in
$S_i$, we get that for almost every $i=1,2,...$ the projection $U(t)_{i+n}$ of $U(t)$ in
$S_{i+n}$ must also contain the critical point $c$ for every $t$. Hence, $shift_{n}([a])$
(and by the same logic $shift_{-n}([a])$) must also be contained in the signature
$sign(x,c)$. Since this is true for every element $[a]$ of $sign(x,c)$ we get that
$sign(x,c)=shift_{\pm n}(sign(x,c))$.
\end{proof}
The proof of the part of this theorem, which deals with the Cremer cycles, is very similar to the proof of the first part of Theorem 11.17 (on page 138) of \cite{Milnor}.
Indeed, from the statement of our theorem that an invariant lift of a Cremer cycle is an irregular point, the first part of Theorem 11.17 in \cite{Milnor}, which states that a Cremer cycle is a limit cycle of some critical points of $f$, immediately follows.
\\ \\
Note that for the super-attracting, attracting and parabolic cycle cases, there exists a critical point $c$ such that its forward orbits are either points of the cycle (super-attracting case) or converge to the cycle (attracting and parabolic cases). In these three cases, for that critical point $c$ the set $\omega(c)$ is exactly the set of the points of the cycle and $\Omega(c)$ is the immediate basin of attraction of that cycle. For the super-attracting and attracting cycle cases this will be the situation for every critical point $c$, for which the signature of the lift of the cycle is non-trivial.
\\ \\
It is currently unknown to us if there can exist a critical point $c$ such that $\omega(c)$ contains a parabolic cycle as its proper subset. The situation in the Cremer case is currently even more unclear to us. Thus, we do not even know if in the case of a Cremer cycle there always must exist a critical point $c$ such that $\omega(c)$ is equal to the set of the points of the cycle.
\\ \\
% Here I address the "page 9. Definition 19" remark (very long remark) of the Reviewer.
Let us now provide one important non-cyclical case, for which one irregular point can be explicitly constructed and its signature can be explicitly computed. We refer to \cite{McMullen}, \cite{Lyubich3}, \cite{Jiang} and \cite{CK2} for more details relevant to this case.
\\ \\
A quadratics self-map $f(z)=z^2+a$ of the complex plane $\mathbb{C}$, where $a$ is a fixed complex number, is called infinitely renormalizable, if there exists an infinite sequence $renseq=(\{f(i),U(i),V(i),n(i)\}_{i=1,2,...})$ in which:
\begin{itemize}
\item $n(1),n(2),...$ are increasing positive integers and each $n(i)$ divides $n(i+1)$;
\item $U(1),U(2),...$ and $V(1),V(2),...$ are simply connected, open neighborhoods of $0$, isomorphic to a disk, such that for $i=1,2,...$ the closure $\overline{U(i)}$ of $U(i)$ is contained in $V(i)$ and $V(i+1)$ is contained in $U(i)$; and
\item $f(i):U(i)\rightarrow V(i)$ is the restriction of $f^{n(i)}$ to $U(i)$ for $i=1,2,...$. Each $f(i)$ is a branched covering of $V(i)$ of degree two.
\end{itemize}
We say that an infinitely renormalizable map $f(z)=z^2+a$ has \textit{a priori} bounds, if for some number $\delta>0$ the neighborhoods $U(i)$ and $V(i)$ of $0$ can be chosen in such a way, that the modulus $mod(\overline{V(i)}-U(i))$ of the closed annulus $\overline{V(i)}-U(i)$ is greater than $\delta$ for all $i=1,2,...$.
\\ \\
Let $f(z)$ be an infinitely renormalizable map, such that the sequence\\ $(U(1),U(2),...)$ of neighborhoods of $0$ converges to $0$. This, for example, is the case when the infinitely renormalizable map $f(z)$ has \textit{a priori} bounds. Since $V(i+1)$ is contained in $U(i)$ for all $i$, we get that the sequence $(V(1),V(2),...)$ of neighborhoods of $0$ also converges to $0$.
\begin{prop} In the plaque inverse limit $S_{\infty}$ of $f:\mathbb{C}\rightarrow \mathbb{C}$ there exists exactly one point $x=(x_1,x_2,...)$ such that $x_1=0$ and each $x_{n(i)+1}$ is a pre-image of $x_1$ under $f^{-n(i)}$ inside $U(i)$.\\ This point $x$ is irregular and its signature $sign(x,0)$ with respect to the critical point $0$ contains a non-zero element $\beta\in A$ which is the set of classes $[b]$ of all binary sequences $b$ satisfying the following condition: for $b$ there exists a sequence $(\epsilon_1,\epsilon_2,...)$ of increasing positive integers, such that for any integer $j>0$, the entries $b_{\epsilon_j},b_{\epsilon_j+1},b_{\epsilon_j+2},...$ of $b$ can (but are not required to) be $1$ only if their index is of the form $1+n(t)+n(t+1)+...+n(m)$, where $t$ and $m$ are any integers satisfying $j\leq t\leq m$.\\ This element $\beta$ can not be described as $\alpha[b]$ for any binary sequence $b$, which implies that $\beta$ does not have a maximal element.
\end{prop}
\begin{proof}
Starting with $x_1=0$, we construct this irregular point $x=(x_1,x_2,...)$. For every $i=1,2,3,...$ both $f$ and $f(i)=f^{n(i)}$ are branched covering maps of degree $2$ from $U(i)$ onto $f(U(i))$ and $V(i)$, respectively. Hence, the map $f^{n(i)-1}$ is a univalent covering map from $f(U(i))$ onto $V(i)$. Both pre-images of $x_1$ in $U(i)$ under $f(i)=f^{n(i)}$ are obtained by taking the point $x_1=0\in V(i)$, and making $n(i)$ successive appropriate lifts by $f^{-1}$. The initial $n(i)-1$ lifts $x_2,x_3,...,x_{n(i)}$ of $x_1$ by $f^{-1}$ coincide for both of these pre-images of $x_1$ in $U(i)$. The two choices in the lift by $f^{-1}$ which distinguished between these two pre-images of $x_1$ in $U(i)$ appear at the last, $n(i)^{th}$, step. Since $V(j)$ is contained in $V(i)$ for all $j\ge i$, we see that the initial $n(i)-1$ choices of lifts by $f^{-1}$ which we make to lift $x_1=0\in V(j)$ to any one of its two pre-images in $U(j)$ under $f(j)$ coincide with $x_2,x_3,...,x_{n(i)}$.
\\ \\
Thus, we have constructed the unique point $x=(x_1,x_2,...)$ in the plaque inverse limit $S_{\infty}$ of $f:\mathbb{C}\rightarrow \mathbb{C}$, such that $x_1=0$ and each $x_{n(i)+1}$ is a pre-image of $x_1$ under $f^{-n(i)}$ inside $U(i)$. For every open, simply connected neighborhood $W_1$ of $0$ there exists some $j$, such that $V(j)$ is contained in $W_1$. Therefore, the lift $W_{n(j)+1}$ of $W_1$ along the pre-image $x_{n(j)+1}$ of $x_1\in W_1$ contains $U(j)$ and $U(j)$ contains $V(j+1)$. Hence, the lift $W_{n(j+1)+n(j)+1}$ of $W_1$ contains $U(j+1)$, which contains $V(j+2)$. So, $W_{n(j+2)+n(j+1)+n(j)+1}$ contains $U(j+2)$, which contains $V(j+3)$. By that logic, each lift $W_{n(j+m)+n(j+m-1)+...+n(j+1)+n(j)+1}$ will contain $V(j+m+1)$, for all $m$. Since all $V(1),V(2),...$ contain $0$, we get that the lift $W$ of $W_1$ to $S_{\infty}$ along $x$ will contain the critical point $0$ in infinite number of its levels. Thus, $x$ is an irregular point.
\\ \\
Now, let us perform some computations of the signature $sign(x,0)$ of $x$ with respect to the critical point $0$. For every positive integer $i$, let the neighborhood $\widetilde{V(i)}$ of $x$ in $S_{\infty}$ be obtained by lifting $V(i)$ along $x$. For every positive integer $j$, let $b'(j)$ be the binary sequence, which has $1$ in the places $1,n(j)+1,n(j+1)+n(j)+1,n(j+2)+n(j+1)+n(j)+1,...$ and $0$ in all other places. Now take the binary sequence: $$b(j)=\bigvee\limits_{t=j}^{\infty}b'(t).$$
In other words, $b(j)$ has $1$ in its $r^{th}$ place, where $r=1,2,...$, if and only if some $b'(t)$, with $t\ge j$, has $1$ in the $r^{th}$ place. It is clear from the arguments, made above to demonstrate that $x$ is irregular, that for $W_1=V(j)$ and $W=\widetilde{V(j)}$, we get $Ind(\widetilde{V(j)},0)\ge b(j)$. Our arguments do not establish an equality, since, for all $k$, $U(k)$ contains $V(k+1)$, but, in general, is not equal to $V(k+1)$. Hence, while $n(k+1)-1$ lifts of $V(k+1)$ along $x$ can not contain the critical point $0$, $n(k+1)-1$ lifts of $U(k)$ along $x$ may contain $0$. So, $$sign(x,0)\ge \bigcap\limits_{t=1}^{\infty}\alpha[b(t)].$$
\\
Note, that due to Corollary \ref{cor.finite}, the element $\beta=\bigcap\limits_{t=1}^{\infty}\alpha[b(t)]$ of $A$ can not be described as $\alpha[b]$ for any binary sequence $b$. This follows from the fact that the equivalence classes $[b(t)]$, as $t$ goes to infinity, can not stabilize, as required by Corollary \ref{cor.finite}, in order to have such a maximal element.
\end{proof}
To clarify the last point, that the equivalence classes $[b(t)]$, as $t$ goes to infinity, can not stabilize, and to understand $\beta$ a little better, let us consider a concrete example. Let $a=-1.401...$ be the Feigenbaum parameter and consider the map $f(z)=z^2+a$. It is known from the literature (see, for example, \cite{Buff}, page 114 in \cite{McMullen}, and page 34 in \cite{Jiang}), that $f(z)$ is infinitely renormalizable with $n(j)=2^j$ and has \textit{a priori} bounds.
\\ \\
First, note that in this case $b'(j)$ will have $1$ in the places $1,1+2^j,1+2^j+2^{j+1},1+2^j+2^{j+1}+2^{j+2},...$ and $0$ elsewhere. Next, $b(j)$ will have $1$ in the first place and in all the places of form $1+2^t+2^{t+1}+...+2^m$, where $j\le t\le m$. Hence, $\beta$ is the set of classes $[b]$ of all binary sequences $b$ which satisfy the following condition: for $b$ there exists a sequence $(\epsilon_1,\epsilon_2,...)$ of increasing positive integers, such that for any integer $j>0$, the entries $b_{\epsilon_j},b_{\epsilon_j+1},b_{\epsilon_j+2},...$ of $b$ can (but are not required to) be $1$ only if their index is of the form $1+2^t+2^{t+1}+...+2^m$, where $t$ and $m$ are any integers which satisfy $j\leq t\leq m$. For example, $\beta$ will contain an element $$[1,0,1,0,1,0,0,0,1,0,0,0,0,0,0,0,1,0...],$$ which has $1$ at the first and $(2^j+1)^{th}$ places, for $j=1,2,...$. This becomes clear if we take $\epsilon_j=1+2^j$. However, if we take $\epsilon_j=1+2^j+2^{j+1}$, we get that $\beta$ also contains a bigger element $$[1,0,1,0,1,0,1,0,1,0,0,0,1,0,0,0,1,0,0,0,0,0,0,0,1,0,...],$$
which has $1$ at the first, $(2^j+1)^{th}$, and $(2^{j+1}+2^j+1)^{th}$ places, for $j=1,2,...$. Similarly, increasing the rate of growth of the sequence $(\epsilon_1,\epsilon_2,...)$, we find bigger and bigger elements in $\beta$. Indeed, we can take $\epsilon_j=2^{h(j)}+2^{h(j)-1}+...+2^j+1$, where $h(j)$ is an arbitrarily ``big" integer function.
% Until here is the address the "page 9. Definition 19" remark (very long remark) of the Reviewer. I am forced to add "Now we proceed to another dynamical case, in which irregular points are always present." to the next paragraph to connect it to the preceding text.
\\ \\
Now we proceed to another non-cyclical case in which irregular points are always present. For the definition of a Siegel disk see page 134 in \cite{Bea}, page 55 in \cite{CG}, and page 126 in \cite{Milnor}. For the definition of a Herman ring see page 160 in \cite{Bea}, page 74 in \cite{CG},
and page 161 in \cite{Milnor}. It is well known that the boundary components of the Siegel disks and the Herman rings are contained in the closure of the forward orbits of the critical points of $f$ (see, for example, Corollary 14.4 and Corollary 15.7 in \cite{Milnor}).
\\ \\
% Here (related to Reviewer's last "listed" comment regarding page 18) I removed/added/changed:
Lyubich and Minsky state and prove (on page 50 of \cite{LM}) the Shrinking Lemma. On page 8 of \cite{LM} the Shrinking Lemma is used to show, that every point, belonging to the invariant lift of the boundary of a Siegel disk or a Herman ring, is irregular. Due to its importance, we briefly reproduce this material (from page 8) in Lemma \ref{SiegelHerman} and its proof below. Then, we use Lemma \ref{SiegelHerman} to study the sets $\Omega(c)$ and $\Gamma(c)$ for certain cases of Siegel disks.
\\ \\
Let $W$ be an open Siegel disk or an open Herman ring in $S_0$, where $S_0$ is a complex plane or a Riemann sphere. Let $\partial W$ be the boundary of $W$. Let $\widehat{\partial W}$ be the invariant lift of $\partial W$ to the plaque inverse limit $S_{\infty}$. The following lemma and its proof appear, as a part of a discussion, on page 8 of \cite{LM}:
\begin{lem} \label{SiegelHerman} Any point $x\in \widehat{\partial W}$ is irregular.
\end{lem}
\begin{proof} Let $x=(x_1,x_2,...)$ be a point in $\widehat{\partial W}$. Note that any neighborhood of $x_1$ can not be entirely contained inside any rotation domain $-$ a Siegel disk or a Herman ring. Suppose that $x_1$ has a neighborhood $U_1$, such that for some positive integer $n$, all the lifts $U_n,U_{n+1},...$ of $U_1$ along $x$ do not contain any critical points of $f$. By the Shrinking Lemma, for any neighborhood $V_{n}$ of $x_{n}$, such that its closure $\overline{V_{n}}$ is contained in $U_{n}$, the diameters (in the spherical metric) of the lifts $V_{n+1},V_{n+2},...$ of $V_{n}$ along $x$ must converge to zero. But the restriction $f_W$ of $f$ to $W$ is a univalent map conjugated to a rotation. Hence, the diameters of the lifts by $f_W^{-1}$ of the intersection $V_{n}\cap W$, which are less than or equal to the corresponding diameters of $V_{n+1},V_{n+2},...$, can not shrink. This constitutes a contradiction. Thus, any point $x\in \widehat{\partial W}$ must be irregular.
\end{proof}
Now we investigate the case, in which $W$ is a Siegel disk and its boundary $\partial W$ is a Jordan curve. Several cases in which this occurs and $\partial W$ contains a critical point, are listed and studied in \cite{PZ}, \cite{Zhang}, and \cite{CR}. One such case is presented in the Main Theorem of \cite{Zhang}. It asserts that if $f$ is a rational map of a degree $\ge 2$ and has a Siegel disk $W$, with its rotation number $\theta$ being an irrational number of a bounded type, then $\partial W$ is a quasi-circle, which contains a critical point of $f$. A case, in which a boundary component $\partial W$ of a Siegel disk $W$ is a Jordan curve, which does not contain any critical points, is presented in \cite{Roesch}.
\\ \\
The map $f:W\rightarrow W$ is conjugated (by a conformal homeomorphism) to a rotation of the unit disk by the angle $2\pi\theta$. The Caratheodory Extension Theorem permits us to extend this conjugation of $f:W\rightarrow W$ to conjugation of $f:\overline{W}\rightarrow \overline{W}$ (by a
homeomorphism) to a rotation of the closed unit disk by the angle $2\pi\theta$. Thus, $f:\partial W\rightarrow \partial W$ is conjugated (by a homeomorphism) to a rotation of the unit circle $S^{1}$ by the angle $2\pi\theta$.
\begin{lem} If the boundary $\partial W$ of a Siegel disk $W$ is a Jordan curve then there exists a critical point $c$ of $f$ such that $\partial W$ is contained in $\Gamma(c)$.
\end{lem}
\begin{proof}
Since every point of $\widehat{\partial W}$ is irregular, there exists some $x\in \widehat{\partial W}$ and some critical point $c$, such that $sign(x,c)$ is not trivial. Let $[a]$ be a non-trivial class of binary sequences, which is contained in $sign(x,c)$ and let a binary sequence $a=(a_1,a_2,...)$ be a representative of this class. Then, for any neighborhood $U_1$ of $x_1$ in $S_0$, almost all the lifts $U_{a_1},U_{a_2},...$ of $U_1$ along $x$ will contain $c$.
\\ \\
For any point $y\in \widehat{\partial W}$ and any neighborhood $V_1$ of $y_1$ in $S_0$, since $f$, restricted to $\partial W$, is conjugated to a rotation of a circle by an irrational angle, there exists some number $t$, such that the lift $V_t=f^{-t}(V_1)$ of $V_1$ along $y$ will contain $x_1$. Hence, almost all lifts $V_{a_1+t},V_{a_2+t},...$ of $V_1$ along $y$ will contain $c$. Indeed, lifting $V_t$ along $(y_t,y_{t+1},...)$ is the same as lifting $V_t$ along $x$, since $f$ is bijective on $\partial W$. Therefore, for any neighborhood $V$ of $y$ in $\widehat{\partial W}$, an infinite number of levels of $V$ contain $c$. So, $sign(x,c)$ is not trivial. Thus, $\Gamma(c)$ contains the entire $\partial W$.
\end{proof}
% And here I address the last remark of the Reviewer, where he asks for explicit calculation of signatures for a Siegel disk.
For the case in which $\partial W$ is a Jordan curve which contains a critical point $c$, we can explicitly compute the signature $sign(x,c)$ for any point $x\in \widehat{\partial W}$, as follows:\\ \\
Let $\varphi:\partial W\rightarrow S^1$ be the unique homeomorphism, such that $\varphi(c)=+1$ and $\varphi\circ f\circ\varphi^{-1}=rot_{\theta}$, where $rot_{\theta}$ is the clockwise rotation of the $S^1$ by the angle $2\pi\theta$. Since $\theta$ is irrational, such a homomorphism $\varphi$ is unique, since it must take point $f^t(c)$ to $e^{-2t\pi\theta\sqrt{-1}}$ for all $t=1,2,...$. But, the forward orbits of $c$ are dense in $\partial W$. So, by continuity, $\varphi$ is uniquely determined at every point of $\partial W$. We use $\sqrt{-1}$, and not $i$, to denote the imaginary numbers, to avoid confusion with the index $i$.
\\ \\
Let $r_1=e^{2\pi\tau_1\sqrt{-1}}\in S^{1}$ be $\varphi(x_1)$. Let $(U(1),U(2),...)$ be a sequence of neighborhoods of $x$, converging to $x$ and for $i=1,2,...$, let $V(i)=\varphi(U(i)_1\cap \partial W)$. Clearly, $(V(1),V(2),...)$ is a sequence of neighborhoods of $r_1$ in $S^1$ which converges to $r_1$. The projection $U(i)_j\in S_j$ of $U(i)\in S_\infty$ contains $c$ if and only if the set $V(i)$, after a counterclockwise rotation by $2\pi\theta\cdot j$, contains $+1$. Let $(\epsilon(1),\epsilon(2),...)$ be any sequence of decreasing positive real numbers, which converges to $0$. Since the signature does not depend on a particular choice of a sequence of neighborhoods of a point, which converges to that point, we can assume that $(U(1),U(2),...)$ was selected in such a way that each $V(i)$ is the open arc of angle $2\pi\epsilon(i)$ in $S^1$, located between the points $e^{2\pi(\tau_1-\epsilon(i))\sqrt{-1}}$ and $e^{2\pi(\tau_1+\epsilon(i))\sqrt{-1}}$ of $S^1$.
\\ \\
For all $i=1,2,...$, there is a binary sequence $b(i)$ in the equivalence class $ind(U(i),c)$, which contains $1$ in its $j^{th}$ place if and only if the arc $V(i)$, after the counterclockwise rotation by the angle $2\pi\theta\cdot j$, contains the point $+1$. This is equivalent to requiring that for some integer $m$, the real number $\tau_1+\theta\cdot j$ satisfies $m-\epsilon(i)<\tau_1+\theta\cdot j<m+\epsilon(i)$. Thus, we obtain:
\begin{lem} \label{noncycleirreg} Let $W$ be a Siegel disk with a rotation number $\theta$, such that its boundary $\partial W$ is a Jordan curve, which contains a critical point $c$. Let $x=(x_1,x_2,...)$ be any point of the invariant lift $\widehat{\partial W}$ of $\partial W$ to the P.I.L. Denote by $\tau_1$ the angle of $\varphi(x_1)\in S^1$ (measured, going counterclockwise, from $+1$) divided by $2\pi$. The signature $sign(x,c)$ is the set of all the classes $[a]$ of binary sequences $a$, such that for $a$ there exists a sequence $(\delta_1,\delta_2,...)$ of decreasing positive real numbers, converging to $0$, so that for any positive integer $j$, the entries $a_j,a_{j+1},a_{j+2},...$ of $a$, can be $1$ only if $\tau_1+\theta\cdot j$ is within $\delta_j$ distance from some integer $m_j$.
\end{lem}
The signatures, described in Lemma \ref{noncycleirreg}, do not have a maximal element $-$ there does not exist a binary sequence $a$, such that $sign(x,c)=\alpha([a])$. This follows from the fact that the equivalence classes $ind(U(i),c)$, as $i$ goes to infinity, can not stabilize, as required by Corollary \ref{cor.finite}, in order to have such a maximal element.
\\ \\
Note that in the case described in Lemma \ref{noncycleirreg}, for a critical point $c\in \partial W$, we have $\partial W=\omega(c)=\Gamma(c)$ and $c\in \Omega(c)$.
\\ \\
One sees from Lemma \ref{noncycleirreg}, that the signature $sign(x,c)$, in the case when $x_1=c$, is closely related to the rational approximations of $\theta$. Indeed, since in this case $\tau_1=0$, the requirement of the lemma becomes $$\frac{m_j}{j}-\frac{\delta_j}{j}<\theta<\frac{m_j}{j}+\frac{\delta_j}{j}$$ for some integers $m_j$. Thus, we can take our $j$ from any sequence $(j_1,j_2,...)$ of increasing denominators of the rational approximations $\frac{m_{j_i}}{j_i}$ of $\theta$, satisfying that each $i^{th}$ approximation is within the $\frac{\delta_{j_i}}{j_i}$ distance to $\theta$. Here, $(\delta_1,\delta_2,...)$ is any sequence  of decreasing positive real numbers, converging to $0$. We refer to Chapter XI in \cite{HW} for further reading on such approximation of irrationals by rationals.
\\ \\
For example, Theorem 193 in \cite{HW} asserts that for any irrational number $\theta$ there are infinitely many rational solutions $\frac{m_{j_1}}{j_1},\frac{m_{j_2}}{j_2},...$, with $j_1<j_2<...$, to the equation $|\theta-\frac{m}{j}|<\frac{1}{\sqrt{5}j^2}$. Thus, if we define $\delta_j=\frac{1}{\sqrt{5}j}$, and construct a binary sequence $a$ by placing $1$ in all the places $j_i$, and $0$ in all the other places, then the equivalence class $[a]$ will be non-trivial and will be contained in $sign(x,c)$. By taking more and more ``slowly" converging sequences $(\delta_1,\delta_2,...)$, we can find bigger and bigger elements in $sign(x,c)$.
% Until here (related to Reviewer's last two comments).
% And now some words about the "almost equal" relation and its meaning.

 \end{document}